\journal{$\;$}
\begin{document}

\newtheorem{theorem}{Theorem}[section]
\newtheorem{lemma}{Lemma}[section]
\newtheorem{remark}{Remark}[section]
\newtheorem{example}{Example}[section]
\newdefinition{definition}{Definition}[section]
\newproof{proof}{Proof}
\renewcommand{\theequation}{\thesection.\arabic{equation}}

\begin{frontmatter}

%\title{Reconstruction of continuous functions from  weighted moving local averages over rectangular unions on the %plane}

\title{Range of certain convolution operators  and reconstruction from local averages}
\author{P. Devaraj}

\address{School of Mathematics, Indian Institute of Science Education and Research,\\ Thiruvananthapuram-695551, Kerala, India.\\
e-mail: devarajp@iisertvm.ac.in}

\begin{abstract}
For a   compactly supported absolutely continuous measure $\mu$    on ${\mathbb{R}}^2$ having  a density function equal to
a finite linear combination of indicator functions of rectangles $\left[a_{i}, b_{i}\right]\times \left[c_{i}, d_{i}\right],$  we analyse the range of the convolution operator $C_{\mu}:C({\mathbb{R}}^2)\rightarrow C({\mathbb{R}}^2)$ defined by $C_{\mu}(f)=f\star\mu,$ where
  $(f\star \mu)(x,y)=\int_{{\mathbb{R}}^2}f(x-s,y-t)d\mu.$  It  is shown that $C_{\mu}$  maps the space of all continuous  functions $C({\mathbb{R}}^2)$ onto the space $C^{2*}({\mathbb{R}}^2)=\{f:{\mathbb{R}}^2\rightarrow {\mathbb{C}}:\frac{\partial^2 f}{\partial x \partial y},\frac{\partial^2 f}{\partial y \partial x}\in C({\mathbb{R}}^2)\}$
  provided the density function of $\mu$  satisfies certain conditions. 
  %either $a_{i}<a_{i+1},b_{i}<b_{i+1}, c_{i}<c_{i+1}, d_{i}<d_{i+1}$ or $a_{i}>a_{i+1},b_{i}>b_{i+1}, c_{i}<c_{i+1}, d_{i}<d_{i+1}$  for every $i.$
\end{abstract}

\begin{keyword}
Reconstruction \sep Deconvolution \sep Functional equations  \sep convolution operators

 \MSC Primary 42A85,44A35  \sep Secondary 42A75, 39A12

\end{keyword}

\end{frontmatter}

%\newtheorem{theorem}{Theorem}[section]
%\newtheorem{lemma}{Lemma}[section]

%\theoremstyle{definition}
%\newtheorem{definition}{Definition}[section]
%\newtheorem{example}{Example}[section]
%\newtheorem{xca}{Exercise}[section]

%theoremstyle{remark}
%\newtheorem{remark}{Remark}[section]
%\renewcommand{\theequation}{\thesection.\arabic{equation}}
%\numberwithin{equation}{section}

%\begin{document}

%\title{Reconstruction of continuous functions from locally uniform weighted averages}
%\author{P. Devaraj}
%\address{Department of Mathematics, College of Engineering Guindy, Anna University, Chennai-25,  India, PIN 600 025}
%\email{devaraj@annauniv.edu, devap2001@yahoo.com}
%\subjclass[2000]{Primary 42A85,44A35 ; Secondary 42A75, 39A12}
%\keywords{Deconvolution, unsmoothing,functional equation, mean-periodic
%functions, difference equations}
%\date{Received: date / Accepted: date}
%\maketitle

 \section{Introduction}
Many linear translation invariant  systems like atmospheric imaging and image reconstruction are modeled using convolution type of integral equations.
%\begin{equation}
%\int_{{\mathbb{R}}^2}f(x-s,y-t)d\mu=g(x,y),\label{eqn1.1}
%\end{equation}
%where $f(x,y)$ is the  input signal and $\mu$ is the system input response function and $g$ is the output signal.
For instance, when a ground-based telescope is used to take an astronomical image, serious blurring
can occur. This is  due to the scattering of light waves when they pass through the atmosphere in the presence of  atmospheric
turbulence. A simple model\cite{Pat} of this image blurring can be written
as
\begin{equation}
T(f)(x, y) = g(x, y), (x, y)\in \Omega,
\label{eqn1.1}
\end{equation}
where $f(x, y)$ represents the gray-values of the true image at the two-dimensional
location $(x, y)$  and
 $g(x, y)$ denotes the gray-values of the observed image (the
“data”, as collected using the telescope), $\Omega$
 is the (x, y)-region of the image, and the
process of blurring or scattering is described via the bounded linear operator T,

$$T(f)(x, y) =\int_{\Omega}f(x-s,y-t) k(s,t) ds dt, (x, y)\in \Omega.$$
The kernel $k$ is known as a point spread function and is generally assumed to be given.

 In these scenarios equation \ref{eqn1.1}  can be written in a most general form as $(f\star \mu)(x,y)=g(x,y),$ where $$(f\star \mu)(x,y)=\int_{-\infty}^{\infty}\int_{-\infty}^{\infty} f(x-s,y-t)d\mu.$$ We consider the problem of reconstructing $f$ from $g$ and $\mu$ when the density function of $\mu$ is a finite linear combination of indicator functions of rectangles in  ${\mathbb{R}}^2.$ That is we are interested in characterizing the range of the operator  $$C_{\mu}:C({\mathbb{R}}^2)\rightarrow C({\mathbb{R}}^2)$$ defined by $C_{\mu}(f)=f\star \mu.$
 We explicitly  construct a solution $f$ for every $g$ in $C^{2*}({\mathbb{R}}^2).$
 The functions in the kernel of this  convolution operator $C_{\mu}$ for a given $\mu$ are called mean-periodic functions. The characterization of the kernel has been analysed on various groups \cite{bag,ber1,Del,deva,Ehr1,Ehr2,kah,sch,sze,wei}.

Erich Novak and Inder K. Rana\cite{rana1}, have    analyzed the range of $C_{\mu}$ when the density function of the convolver $\mu$ is the indicator function of an
interval $[-a,a]$ on ${\mathbb{R}}$ and the same has been extended in  \cite{deva1,deva3} for other different types of the convolvers on the group ${\mathbb{R}}$.  Another special case  namely finitely supported measures $\mu$ on ${\mathbb{R}}^2$,  is analysed in \cite{deva2}.

 Radon \cite{rad} has analysed a related problem and showed  that a function $f\in C_{c}({\mathbb{R}}^2)$ can be recovered  explicitly from its integrals over all straight lines in ${\mathbb{R}}^2.$ The inversion formula of Radon and its generalizations form the core of computerized tomography.  Radon's inversion has been generalized to higher-dimensional Euclidean spaces and to various symmetric spaces  by Helgason\cite{hel} and others.

 Another closely  related problem{\cite{pom} was posed by Pompeiu  for n=2: Is it possible to recover a continuous function $f\in C({\mathbb{R}}^2)$ from its integrals over all balls of a fixed radius $r>0?$  The one ball transform

 $$B_{r}f(x) := \int_{|y|\le r} f(x+y)dy$$
 is not one-to-one on $C({\mathbb{R}}^n)$ \cite{bro, cha}. But still the convolution equation
 \begin{equation}
 B_{r}f=g
 \label{eqn1}
 \end{equation}
 may have solution $f$ and is unique if $f$ is assumed to satisfy some priori conditions.
 Explicit inversion formula  is known only for $n=1$\cite{rana1}. A  more general problem is that of solving the convolution equation of the following type
  \begin{equation}
 \sum_{i=1}^{k}\alpha_{i}B_{r_{i}}f=g,
 \label{eqn2}
 \end{equation}
where  $\alpha_{i}$ are complex constants.The above convolution operator  is analysed in \cite{deva3} over real line.

Novak\cite{nov} has obtained  an inversion formula for $f$ from its integrals namely,

 $$T_{n}f(x)=2^{-n}\int_{x+Q}f(y)dy,$$
 where $Q=[-1,1]^{n}.$

 In this paper, we analyse the range of $C_{\mu}$ when the density function of $\mu$ is a finite linear combinations indicator functions of rectangles in ${\mathbb{R}}^2$ satisfying certain conditions.

\section{Notations and Preliminaries}
\setcounter{equation}{0}
 By  a convex cone in ${\mathbb{R}}^2$ we mean a nonempty  subset $C$ of ${\mathbb{R}}^2$
 such that  $(x_1,y_{1}),(x_2,y_{2}),\ldots,(x_{k},y_{k})\in C$ and $\alpha_i\ge 0$  implies
$(\sum_{i=1}^{k}\alpha_i (x_i,y_{i}))\in C.$
 A convex cone $C$ satisfying  $C\cap(-C)=\{0\}$ is called a pointed convex cone. If a convex cone is generated by two nonzero vectors $u$ and $v$ in  ${\mathbb{R}}^2$,
 then its bisector is the ray $\displaystyle{\left\{t\left(\frac{u}{2\|u|}+\frac{v}{2\|v\|}\right): t\ge 0\right\}}.$

A line $L(v,\alpha)$ determined by a vector $v=(v_{1},v_{2})$ and a scalar $\alpha$ is the  set $\{(x,y)\in {\mathbb{R}}^2:v_{1}x+v_{2}y=\alpha\}.$ The left-half and the right-half spaces determined by a line $L(v,\alpha)$ are the sets $HS_{l}(v,\alpha)=\{(x,y)\in {\mathbb{R}}^2: v_{1}x+v_{2}y\le \alpha\}$ and $HS_{r}(v,\alpha)=\{(x,y)\in {\mathbb{R}}^2: v_{1}x+v_{2}y\ge \alpha\}$ respectively.

\centerline{$\displaystyle{\pi_{i}(x,y):=\left\{\begin{array}{lll} x&\mbox{if}&i=1\\
y&\mbox{if}&i=2. \end{array} \right.\\
}$} denotes   the coordinate-wise projections and the following notations are used for the translated  quadrants  of the plane:
\begin{eqnarray*}
Q_{1}(\alpha,\beta) &=&\{(x,y)\in {\mathbb{R}}^2:x\ge \alpha, y \ge \beta\},\\
Q_{2}(\alpha,\beta) &=&\{(x,y)\in {\mathbb{R}}^2:x\le \alpha, y \ge \beta\},\\
Q_{3}(\alpha,\beta) &=&\{(x,y)\in {\mathbb{R}}^2:x\le \alpha, y \le \beta\},\\
Q_{4}(\alpha,\beta) &=&\{(x,y)\in {\mathbb{R}}^2:x\ge \alpha, y \le \beta\}.\\
\end{eqnarray*}

For the rectangles, $\displaystyle{R_{i}=[a_{i},b_{i}] \times [c_{i},d_{i}]}$, we denote
$$ M_{i}=\frac{a_{i}+b_{i}}{2},\;
N_{i}=\frac{c_{i}+d_{i}}{2},\;
S_{i}=\frac{b_{i}-a_{i}}{2}\mbox{ and }
T_{i}=\frac{d_{i}-c_{i}}{2}.$$

We use the notation $C^{m}({\mathbb{R}}^{2})$ for $m$ times continuously differentiable functions. i.e., $C^{m}({\mathbb{R}}^2)=\{f:{\mathbb{R}}^{2}\rightarrow {\mathbb{C}}:\frac{\partial^{i+j}f}{\partial x^{i}\partial y^{j}} \in C({\mathbb{R}}^2),0\le i,j\le m, 0\le i+j\leq m\}.$ Also
$C^{2*}({\mathbb{R}}^2)=\{f:{\mathbb{R}}^2\rightarrow {\mathbb{C}}:\frac{\partial^2 f}{\partial x \partial y},\frac{\partial^2 f}{\partial y \partial x}\in C({\mathbb{R}}^2)\}.$

\begin{definition}
A  Borel measure $\mu$ on ${\mathbb{R}}^2$ is called a
discrete  Borel measure, if  $supp(\mu)\cap ([-n,n]\times [-n,n])$ is a finite subset of ${\mathbb{R}}^{2}$ for every $n\in {\mathbb{N}}.$
 The set of all  discrete Borel measures on ${\mathbb{R}}^2$ is denoted by $M_{d}({\mathbb{R}}^2 ).$
\end{definition}

If  $\mu$ is a compactly supported discrete Borel measure on ${\mathbb{R}}^2,$ then there exists a distinct set of vectors  $(x_1,y_{1}), (x_2,y_{2}),\ldots,(x_k,y_{k})\in {\mathbb{R}}^2$ and nonzero complex constants $\alpha_1,\alpha_2,\ldots,\alpha_k$ such that $\displaystyle{\mu(E)=\sum_{i=1}^k
\alpha_i\delta_{(x_i,y_{i})}(E)}$ for every Borel set $E$ of ${\mathbb{R}}^2.$

 We denote by $M_{cd}({\mathbb{R}}^2)$ the set of all
compactly supported  discrete Borel measures on ${\mathbb{R}}^2.$  The set of all compactly supported regular Borel
measures on ${\mathbb{R}}^2$  is denoted by $M_{c}({\mathbb{R}}^2).$
\begin{definition}
The convolution of a function $f\in C({\mathbb{R}}^2)$  with a compactly supported $\mu\in M_{c}({\mathbb{R}}^2)$ is defined as

\centerline{$\displaystyle{(f\star\mu)(x,y)=\int_{\mathbb{R}^2}f(x-s,y-t)d\mu}.$}
\end{definition}

\noindent The above convolution can be written as

\centerline{ $\displaystyle{(f\star\mu)(x,y)=\sum_{i=1}^{k} \alpha_i f(x-x_i,y-y_{i})},$  }
\noindent
when
$\displaystyle{\mu=\sum_{i=1}^{k} \alpha_i\delta_{(x_i,y_{i})}}\in M_{cd}({\mathbb{R}}^2).$

\noindent Also, when  the density function of $\mu$ is  a finite linear combination $\displaystyle{\sum_{i=1}^{k}\alpha_{i}\chi_{[a_{i},b_{i}]\times [c_{i},d_{i}]}}$ of indicator functions, the convolution becomes

\centerline{$\displaystyle{(f\star \mu)(x,y)=\sum_{i=1}^{k}\alpha_{i}\int_{a_{i}}^{b_{i}}\int_{c_{i}}^{d_{i}}f(x-s,y-t)dtds.}$}

For  a discrete measure, the convolution $f\star \mu$ is not always defined, for instance if $\mu=\sum_{i=1}^\infty \left(\frac{1}{i^2}\right)\delta_{(-i,0)}, {\mbox and } f(x,y)=x,$
then $(f \star \mu)(0,0)=\sum_{i=1}^\infty \frac{1}{i},$ which is not defined.
 In order to define the convolution in such cases  we need to restrict it to a  suitable class of continuous functions and a corresponding class of discrete measures.   These are obtained in the following lemma:

\begin{lemma} Let $f\in C({\mathbb{R}}^{2})$ and $\mu\in M_{d}({\mathbb{R}}^{2}).$
Let $C\subset {\mathbb{R}}^2$ be a pointed convex cone having a bisector $v=(v_{1},v_{2})\in {\mathbb{R}}^2 $ such that $C\subset H_{r}(v,0)$.
If either $supp(f)\subset H_{r}(v,\alpha)$ and $supp(\mu)\subset C$  for some $\alpha\in {\mathbb{R}}$ or  $supp(f)\subset H_{l}(v,\alpha)$ and $supp(\mu)\subset -C$  for some $\alpha\in {\mathbb{R}},$ then $(f\star\mu)(x,y)$ is well defined for every $(x, y)\in {\mathbb{R}}^{2}.$ Moreover, if $f\in C^{m}({\mathbb{R}}^2)$, then $\frac{\partial^{m}(f\star\mu)} {\partial x^p \partial y^q}(x,y)=( \frac{\partial^{m}f} {\partial x^p \partial y^q}\star\mu)(x,y).$
\label{lemnew1}
\end{lemma}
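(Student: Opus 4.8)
The plan is to reduce the possibly infinite sum $(f\star\mu)(x,y)=\sum_i \alpha_i f(x-x_i,y-y_i)$ to a finite one by a geometric argument, and then to obtain the differentiation formula locally. I would treat the first alternative (with $supp(f)\subset H_r(v,\alpha)$ and $supp(\mu)\subset C$); the second is entirely symmetric, since writing $w=-v$ turns $H_l(v,\alpha)$ into $H_r(w,-\alpha)$ and makes $-C$ a pointed convex cone with bisector $w$ satisfying $-C\subset H_r(w,0)$, which reduces it to the first case.

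The key geometric observation I would establish first is that there is a constant $c>0$ with $v_1 s + v_2 t \ge c\,\|(s,t)\|$ for every $(s,t)\in C$. Since $C$ is a pointed convex cone with bisector $v$ and $C\subset H_r(v,0)$, the function $(s,t)\mapsto v_1 s+v_2 t$ is nonnegative on $C$ and vanishes only at the origin: if $C$ is generated by $u$ and $w$, pointedness forces the angle between $u$ and $w$ to be strictly less than $\pi$, so the bisector $v$ satisfies $v\cdot u>0$ and $v\cdot w>0$, whence $v\cdot(\lambda u+\eta w)=0$ with $\lambda,\eta\ge0$ forces $\lambda=\eta=0$. Restricting the continuous map $(s,t)\mapsto v_1 s+v_2 t$ to the compact arc $C\cap\{\|(s,t)\|=1\}$ then yields a positive minimum $c$, and homogeneity gives the claimed bound on all of $C$. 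The consequence I need is that for any real $K$ the slab $\{(s,t)\in C: v_1 s+v_2 t\le K\}$ is bounded, being contained in the disc of radius $K/c$.

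Next I would show the sum is locally finite. Fix $(x,y)$. A term $f(x-x_i,y-y_i)$ can be nonzero only when $(x-x_i,y-y_i)\in supp(f)\subset H_r(v,\alpha)$, i.e. $v\cdot(x_i,y_i)\le v\cdot(x,y)-\alpha$; together with $(x_i,y_i)\in C$ this confines the contributing support points to the bounded slab $\{(s,t)\in C:0\le v\cdot(s,t)\le v\cdot(x,y)-\alpha\}$. Since $\mu\in M_d({\mathbb{R}}^2)$ meets every bounded set in finitely many points, only finitely many terms survive and $(f\star\mu)(x,y)$ is a well-defined finite sum. Running the same estimate with $(x,y)$ ranging over a fixed closed ball $B$ bounds $v\cdot(x,y)$ uniformly on $B$, so a single finite set of support points $(x_{i_1},y_{i_1}),\dots,(x_{i_N},y_{i_N})$ accounts for all nonzero terms throughout $B$, the remaining terms vanishing identically on $B$. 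Hence on $B$ we have $(f\star\mu)(x,y)=\sum_{l=1}^N \alpha_{i_l} f(x-x_{i_l},y-y_{i_l})$, a finite sum of continuous functions, which incidentally also gives continuity of $f\star\mu$.

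Finally, for the differentiation formula I would note that the support of any derivative $g:=\frac{\partial^m f}{\partial x^p\partial y^q}$ is contained in $supp(f)\subset H_r(v,\alpha)$, so $g\star\mu$ is well defined by the same reasoning and its contributing terms on $B$ form a subset of the same index set. Differentiating the finite sum $\sum_{l=1}^N \alpha_{i_l} f(x-x_{i_l},y-y_{i_l})$ term by term on the interior of $B$ yields $\frac{\partial^m(f\star\mu)}{\partial x^p\partial y^q}(x,y)=\sum_{l=1}^N \alpha_{i_l}\,g(x-x_{i_l},y-y_{i_l})=(g\star\mu)(x,y)$, the extra terms of $g\star\mu$ being zero on $B$; since $(x,y)$ was arbitrary this holds on all of ${\mathbb{R}}^2$. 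The main obstacle is the geometric boundedness claim of the second paragraph: everything else is bookkeeping, but the finiteness of the convolution rests entirely on $C$ being pointed with $v$ as its bisector, which is exactly what forces the linear functional $(s,t)\mapsto v_1 s+v_2 t$ to be coercive on $C$.
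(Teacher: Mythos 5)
Your proof is correct, and it follows the same overall reduction as the paper: a term $f(x-x_i,y-y_i)$ can be nonzero only if $(x_i,y_i)$ lies in the slab $C\cap H_{l}(v,\,v_1x+v_2y-\alpha)$, and once that slab is shown to be bounded, local finiteness of $supp(\mu)$ makes the convolution a finite sum. You diverge from the paper in two places of execution. For boundedness of the slab, the paper argues by contradiction: an unbounded intersection would force $C$ to contain a full ray in a direction $u$ perpendicular to $v$, and the bisector symmetry would then put $-u$ in $C$, violating pointedness. You instead prove the quantitative coercivity estimate $v_1s+v_2t\ge c\|(s,t)\|$ on $C$ by minimizing the linear functional over the compact unit arc of $C$, which yields an explicit radius $K/c$ for the slab; this is a little stronger than what is needed, and note that your argument uses that $C$ is generated by two vectors, which is legitimate since the paper's notion of bisector presupposes exactly that, but it is worth saying explicitly. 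For the interchange of differentiation and convolution, the paper stabilizes the index set under perturbations $x\mapsto x+h$ by choosing $\beta$ and $\rho$ so that the band $H_{r}(v,\beta-|h|)\cap H_{l}(v,\beta+|h|)$ contains no support points of $\mu$, whereas you fix a closed ball $B$, take one finite index set accounting for all nonzero terms throughout $B$ (the remaining terms of both $f\star\mu$ and $\frac{\partial^{m}f}{\partial x^{p}\partial y^{q}}\star\mu$ vanishing identically on $B$), and differentiate the resulting finite sum term by term. Your localization-on-a-ball argument is the more transparent of the two, since it avoids the selection of $\beta$ and $\rho$ and handles all higher derivatives at once; what the paper's band argument buys is a direct pointwise difference-quotient computation without appealing to a uniform representation on a neighborhood.
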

\begin{proof}Suppose that  $supp(f)\subset H_{r}(v,\alpha)$ and $supp(\mu)\subset C.$

 \noindent Then \begin{eqnarray*}(f\star\mu)(x,y)&=&\int_{-\infty}^{\infty}\int_{-\infty}^{\infty}f(x-s,y-t)d\mu.\\
 &=&\int\int_{C\cap H_l(v,\alpha_{0})}f(x-s,y-t)d\mu,
 \end{eqnarray*}
 where $\alpha_{0}=xv_{1}+yv_{2}-\alpha.$

\vskip 1em
\noindent
 It is easy to see that $C\cap H_l(v,\alpha_{0})$ is a compact set. Otherwise, $C\cap H_l(v,\alpha_{0})$ will be unbounded  and hence $C$ contains every point along a direction $u$ perpendicular to $v$. As $v$ is a bisector, $C$
 contains every vector along $-u.$ This will contradict the fact that $C$ is pointed. Therefore, for every $(x,y)\in {\mathbb{R}}^2,$ $supp(\mu)\cap C\cap H_{l}(v,\alpha_{0})$ is a finite set and hence  $(f\star\mu)(x,y)$ is a finite sum.

Further, one can easily check that for every  $(x,y)\in {\mathbb{R}}^2,$ there exist $\beta\ge \alpha_{0}=xv_{1}+yv_{2}-\alpha$ in ${\mathbb{R}}$ and $\rho>0$ such that for $|h|<\rho,$
$$supp(\mu)\cap C\cap H_r(v,\beta-|h|)\cap H_{l}(v,\beta+|h|)=\emptyset.$$ Therefore we can write,
\begin{eqnarray*}
 (f\star\mu)(x,y)&=&\sum_{i=1}^n \alpha_{i}f(x-x_{i},y-y_{i}),\\
 (f\star\mu)(x+h,y)&=&\sum_{i=1}^n \alpha_{i}f(x+h-x_{i},y-y_{i}),\\
\end{eqnarray*}
for $|h|<\rho$ and  hence
 $$\lim_{h\rightarrow 0}\frac{(f\star \mu)(x+h,y)-(f\star\mu)(x,y)}{h}=\sum_{i=1}^{n}\alpha_{i}\frac{\partial f}{\partial x}(x-x_{i},y-y_{i}).$$
By repeated use of similar arguments we obtain,
$$\frac{\partial^{m}(f\star\mu)} {\partial x^p \partial y^q}(x,y)=\left( \frac{\partial^{m}f} {\partial x^p \partial y^q}\star\mu\right)(x,y).$$
 \vskip 1em
\noindent
 The proof for the case $supp(f)\subset H_{l}(v,\alpha)$ and $supp(\mu)\subset -C$ can be proved in the similar way.

\hfill{$\square$}

\end{proof}

 \begin{lemma}
Let $C$ be a pointed convex cone with bisector $v$ such that $C\subset H_{r}(v,0).$ Then  every $g\in C^{m}({\mathbb{R}}^2)$  can be written as $g=g_{1}+g_{2},$ where $g_{1},g_{2}\in C^{m}({\mathbb{R}}^2),$  $supp(g_{1})\subset   H_{r}(v,\alpha_{1})$ and $supp(g_{2})\subset  H_{l}(v,\alpha_{2})$ for some
$\alpha_{1},\alpha_{2}\in {\mathbb{R}}.$
\label{lemma2.4}
\end{lemma}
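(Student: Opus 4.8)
The plan is to reduce the whole statement to a one–dimensional cutoff in the direction of $v$. Write $\ell(x,y)=v_{1}x+v_{2}y$ for the linear functional associated with $v$, so that by definition $H_{r}(v,\alpha)=\{(x,y):\ell(x,y)\ge \alpha\}$ and $H_{l}(v,\alpha)=\{(x,y):\ell(x,y)\le \alpha\}$. I would build the decomposition $g=g_{1}+g_{2}$ by multiplying $g$ by a smooth partition of unity that depends only on the scalar $\ell(x,y)$. Note that the pointedness of $C$ and the inclusion $C\subset H_{r}(v,0)$ are not actually used in this lemma; they are only relevant later, when one wants to feed $g_{1},g_{2}$ into Lemma~\ref{lemnew1}.

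First I would fix two reals $\alpha_{1}<\alpha_{2}$ (for definiteness $\alpha_{1}=0,\ \alpha_{2}=1$) and produce a function $\phi\in C^{\infty}({\mathbb{R}})$ with $0\le\phi\le 1$, $\phi(s)=0$ for $s\le\alpha_{1}$, and $\phi(s)=1$ for $s\ge\alpha_{2}$. A concrete choice is the standard smooth step
$$\psi(t)=\left\{\begin{array}{ll} e^{-1/t} & \mbox{if } t>0,\\ 0 & \mbox{if } t\le 0,\end{array}\right. \qquad \phi(s)=\frac{\psi(s-\alpha_{1})}{\psi(s-\alpha_{1})+\psi(\alpha_{2}-s)},$$
whose denominator never vanishes, since for $\alpha_{1}<\alpha_{2}$ the two terms $\psi(s-\alpha_{1})$ and $\psi(\alpha_{2}-s)$ cannot be simultaneously zero.

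Next I would set $g_{1}=(\phi\circ\ell)\,g$ and $g_{2}=(1-\phi\circ\ell)\,g$, so that $g_{1}+g_{2}=g$ identically on ${\mathbb{R}}^{2}$. Because $\ell$ is linear and $\phi$ is $C^{\infty}$, the composition $\phi\circ\ell$ lies in $C^{\infty}({\mathbb{R}}^{2})$; applying the product rule, every mixed partial of order at most $m$ of $(\phi\circ\ell)\,g$ is a finite sum of (continuous) derivatives of $\phi\circ\ell$ times derivatives of $g$ of order $\le m$, so $g_{1}\in C^{m}({\mathbb{R}}^{2})$, and likewise $g_{2}\in C^{m}({\mathbb{R}}^{2})$. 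For the support conditions, $\phi(\ell(x,y))=0$ whenever $\ell(x,y)\le\alpha_{1}$, so $g_{1}$ vanishes on $\{\ell\le\alpha_{1}\}$ and hence $supp(g_{1})\subset\overline{\{\ell>\alpha_{1}\}}=H_{r}(v,\alpha_{1})$; symmetrically $1-\phi(\ell(x,y))=0$ for $\ell(x,y)\ge\alpha_{2}$, which gives $supp(g_{2})\subset H_{l}(v,\alpha_{2})$.

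There is essentially no serious obstacle here: the argument is a routine partition-of-unity cutoff. The only points requiring care are the smoothness of $\phi$ at the junction points $\alpha_{1},\alpha_{2}$, which is exactly what the $e^{-1/t}$ construction handles, and the verification that multiplying a $C^{m}$ function by a $C^{\infty}$ cutoff preserves membership in $C^{m}$ (rather than merely $C^{0}$), which follows from the Leibniz rule. I would close by remarking that the values $\alpha_{1},\alpha_{2}$ are arbitrary, which is all the statement demands.
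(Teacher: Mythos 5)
Your proof is correct, and it rests on the same idea as the paper's: write $g_{1}=u\,g$ and $g_{2}=(1-u)\,g$ where $u$ is a smooth partition-of-unity cutoff depending only on the scalar $v_{1}x+v_{2}y$, then read off the support inclusions from where $u$ and $1-u$ vanish. The only real difference is how the cutoff is manufactured. The paper takes the piecewise-linear ramp $h_{1}$ (equal to $0$ on $H_{l}(v,-n)$, to $1$ on $H_{r}(v,n)$, linear in between) and smooths it by convolving with a mollifier $\phi_{\epsilon}\in C_{c}^{m}({\mathbb{R}}^{2})$ supported in a thin strip, so its cutoff $h_{1}\star\phi_{\epsilon}$ is only as smooth as $\phi_{\epsilon}$; this is why the remark following the lemma has to re-choose $\phi_{\epsilon}\in C^{2*}({\mathbb{R}}^{2})$ to get the $C^{2*}$ version. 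Your explicit one-dimensional $e^{-1/t}$ step composed with the linear functional $\ell$ produces a cutoff that is $C^{\infty}$ outright, so a single construction covers every $m$ simultaneously and yields the $C^{2*}$ variant with no modification --- a modest but genuine gain in economy. Your side remark that the pointedness of $C$ and the inclusion $C\subset H_{r}(v,0)$ play no role in this lemma is also accurate; the paper's proof likewise uses nothing but the vector $v$.
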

\begin{proof}
For $\epsilon>0,$ choose $\phi_{\epsilon}\in C_{c}^{m}({\mathbb{R}}^{2})$ such that $supp(\phi_{\epsilon})\subset H_{r}(v,-\epsilon)\cap H_l(v,\epsilon)$ and  $$\int_{{\mathbb{R}}^2}\phi_{\epsilon}(x,y)dxdy=1.$$

\noindent Define $h_1,h_2\in C({\mathbb{R}}^2)$ by

$$\displaystyle{h_1(x,y):=\left\{\begin{array}{lll} 0&\mbox{if}&(x,y) \in H_{l}(v,-n)\\
\frac{n+v_{1}x+v_{2}y}{2n}&\mbox{if}&(x,y) \in H_{r}(v,-n)\cap H_{l}(v,n)\\
1&\mbox{if}&(x,y)\in H_{r}(v,n),
\end{array}
\right. }$$

$$\displaystyle{h_2(x,y):=\left\{\begin{array}{lll} 1&\mbox{if}&(x,y) \in H_{l}(v,-n)\\
\frac{n-(v_{1}x+v_{2}y)}{2n}&\mbox{if}&(x,y) \in H_{r}(v,-n)\cap H_{l}(v,n)\\
0&\mbox{if}&(x,y)\in H_{l}(v,n).
\end{array}
\right. }$$

\noindent It is simple to check that $h_{1}(x,y)+ h_{2}(x,y)=1$  for all $(x,y)\in {\mathbb{R}}^2.$

\noindent Define
\begin{eqnarray*}
g_1(x,y)&=&g(x,y)(h_{1}\star\phi_{\epsilon})(x,y),\\
g_2(x,y)&=&g(x,y) (h_{2}\star\phi_{\epsilon})(x,y).
\end{eqnarray*}

 \noindent Then $g_{1},g_{2}\in C^{m}({\mathbb{R}}^2)$
 and  $g_{1}+g_{2}=g.$ Also
\begin{eqnarray*}
 Supp(g_{1})&\subset & H_{r}(v,-n-2\epsilon),\\
  Supp(g_{2})&\subset & H_{l}(v,n+2\epsilon).\\
    \end{eqnarray*}
\end{proof} \hfill{$\Box$}

It may be noted that in the above proof if we take $g,\phi_{\epsilon}\in C^{2*}({\mathbb{R}}^2),$ then we get $g_{1},g_{2}\in C^{2*}({\mathbb{R}}^2).$

\begin{lemma}
Let $\mu_{1},\mu_{2}\in M_{d}({\mathbb{R}}^2)$ be two nonzero discrete measures such that $supp(\mu_{1})\subset Q_{1}(\alpha_{1},\beta_{1})$  and $supp(\mu_{2})\subset Q_{3}(\alpha_{2},\beta_{2}).$ Then there exist $(x^{*},y^{*})\in supp(\mu_{1})$ and $(x^{**},y^{**})\in supp(\mu_{2})$ and a pointed convex cone  $C$ with bisector $v$ such that
\begin{enumerate}\item $C\subset H_{r}(v,0),$ $Q_{1}(0,0)\subset C,$ $ Q_{3}(0,0)\subset -C,$
 \item $\mu_{1}=c^{*}\delta_{(x^{*},y^{*})}\star (\delta_{0}+\eta_{1}),$  $supp(\eta_{1})\subset C\cap H_{r}(v,\alpha')$ for some $\alpha'>0,$
  \item $\mu_{2}=c^{**}\delta_{(x^{**},y^{**})}\star (\delta_{0}+\eta_{2})$ and $supp(\eta_{2})\subset (-C)\cap H_{l}(v,-\beta')$ for some $\beta'>0.$
      \end{enumerate}
\label{lemma2.5}
\end{lemma}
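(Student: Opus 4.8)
The plan is to realize each $\mu_i$ as a single Dirac mass sitting at an extreme corner of its support, convolved with a remainder whose support is pushed strictly into a cone. I would fix a common bisector direction first, then locate the two corners, and finally build the cone symmetrically around that direction so that it automatically comes out as the bisector.

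First I would choose the direction $v$. Since $Q_1(0,0)$ must lie in $C$ and $Q_3(0,0)$ in $-C$, the natural choice is $v$ close to $(1,1)$ with $v_1,v_2>0$. For the factorizations to yield genuine corners I need the functional $\ell_v(p,q)=v_1p+v_2q$ to attain a unique minimum on $supp(\mu_1)$ and a unique maximum on $supp(\mu_2)$. Because $v_1,v_2>0$ and the supports lie in $Q_1(\alpha_1,\beta_1)$ and $Q_3(\alpha_2,\beta_2)$, every sublevel (resp. superlevel) set of $\ell_v$ meeting the support is bounded, so by the local finiteness in the definition of $M_d({\mathbb R}^2)$ these extrema are attained. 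Two distinct points tie in $\ell_v$ only for $v$ on a single line, so avoiding countably many such lines I can pick $v$ near $(1,1)$ making both extrema unique. Set $(x^*,y^*)$ to be the minimizer, $(x^{**},y^{**})$ the maximizer, and let $c^*,c^{**}$ be the corresponding masses.

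Next I would read off the remainders. Writing $\mu_1=\sum_i\gamma_i\delta_{(p_i,q_i)}$ with $(p_1,q_1)=(x^*,y^*)$, $\gamma_1=c^*$, the required identity $\mu_1=c^*\delta_{(x^*,y^*)}\star(\delta_0+\eta_1)$ forces $\eta_1=\tfrac{1}{c^*}\sum_{i\ge 2}\gamma_i\,\delta_{(p_i-x^*,\,q_i-y^*)}$, and symmetrically for $\eta_2$. The same local finiteness gives a gap: the second-smallest value of $\ell_v$ on $supp(\mu_1)$ strictly exceeds the minimum, so $\ell_v\ge\alpha'>0$ on $supp(\eta_1)$, and likewise $\ell_v\le-\beta'<0$ on $supp(\eta_2)$. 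This is precisely the membership in $H_r(v,\alpha')$ and $H_l(v,-\beta')$.

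Finally I would build the cone. The shifted set $supp(\mu_1)-(x^*,y^*)$ lies in the translated quadrant $Q_1(\alpha_1-x^*,\beta_1-y^*)$ whose corner has nonpositive coordinates, and $(x^{**},y^{**})-supp(\mu_2)$ lies in a translated quadrant with nonpositive corner; both also satisfy $\ell_v>0$ off the origin. I would take $C$ to be the cone symmetric about $v$ whose half-angle $\theta^\ast$ is the supremum of the angular deviation from $v$ taken over $Q_1(0,0)$ together with these two shifted sets. By symmetry $v$ is its bisector and $C\subset H_r(v,0)$ is automatic, while $Q_1(0,0)\subset C$ and hence $Q_3(0,0)=-Q_1(0,0)\subset -C$ hold by construction. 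The \emph{crux}, and the one genuinely geometric step, is to show $\theta^\ast<90^\circ$, i.e.\ that $C$ is pointed. Here the quadrant restriction is essential: the lower bounds $a\ge A$, $b\ge B$ with $A,B\le 0$ force the most counterclockwise boundary vertex of $\{a\ge A,\ b\ge B,\ \ell_v\ge\alpha'\}$ to have angle strictly below $90^\circ+\phi_v$, where $\phi_v\in(0^\circ,90^\circ)$ is the angle of $v$, and symmetrically on the clockwise side, so the deviation from $v$ stays strictly under a right angle even though the bare condition $\ell_v>0$ only yields the closed half-plane. Granting this, $C$ and $-C$ contain the two shifted supports, and all three listed properties follow.
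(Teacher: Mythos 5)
Your proposal is correct, but it takes a genuinely different route from the paper's proof. The paper picks the distinguished points first by a coordinatewise (lexicographic) extremization: $x^{*}=\min\{\pi_{1}(x,y):(x,y)\in supp(\mu_{1})\}$, then $y^{*}$ minimal among the points realizing $x^{*}$; it then builds the cone explicitly, bounding the slopes $\left|\pi_{2}/\pi_{1}\right|$ of the shifted atoms inside a ball of radius $r$ by a number $s$, controlling the atoms outside the ball by the estimate $|\beta_{1}-y^{*}|/\sqrt{r^{2}-(\beta_{1}-y^{*})^{2}}$, and taking $C$ to be the cone generated by $(0,1)$ and $\bigl(1/\sqrt{1+s^{2}},\,-s/\sqrt{1+s^{2}}\bigr)$, whose bisector $v$ is then computed from the generators. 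You invert this order: you choose the direction $v$ first, generically (avoiding the countably many directions orthogonal to differences of atoms, so the extrema of $\ell_{v}$ are unique), take the corners to be the argmin/argmax of $\ell_{v}$, and close the cone symmetrically about $v$, so that $v$ being the bisector is automatic and the only thing left to verify is pointedness; your vertex argument for that is sound, the essential reason being that the recession cone $Q_{1}(0,0)$ of the region $\{a\ge A,\ b\ge B,\ \ell_{v}\ge\alpha'\}$ meets the line $\ell_{v}=0$ only at the origin (the signs of $A,B$ are not really what matters). As for what each approach buys: the paper's construction is fully explicit (explicit generators and bisector, no genericity/countability argument), while yours is more robust on one subtle point -- for an unbounded locally finite support in $Q_{1}(\alpha_{1},\beta_{1})$ the infimum of the first coordinates need not be attained (consider atoms at $(1+1/n,\,n)$), so the paper's lexicographic corner can fail to exist, whereas your minimizer of $\ell_{v}$ with $v_{1},v_{2}>0$ always exists because sublevel sets of $\ell_{v}$ intersected with the quadrant are bounded and hence contain only finitely many atoms; your gap argument (second-smallest value of $\ell_{v}$ exceeds the minimum) then delivers the strict separation $\alpha',\beta'>0$ exactly where the paper gets it from its slope bounds.
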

\begin{proof}
Define
\begin{eqnarray*}
x^{*}&=&Min\{\pi_{1}(x,y):(x,y)\in supp(\mu_{1})\},\\
y^{*}&=&Min \{\pi_{2}(x,y):\pi_{1}(x,y)=x^{*},(x,y)\in supp(\mu_{1})\}.
\end{eqnarray*}

As $supp(\mu_{1})\cap \overline{B}(0,r)$ is a finite set for every $r$,
$(x^{*},y^{*})\in supp(\mu_{1}).$
Therefore,

  $$supp(\mu_{1}\star \delta_{-(x^{*},y^{*})})\subset Q_{1}(0,\beta_{1}-y^{*}).$$

Similarly, define
\begin{eqnarray*}
x^{**}&=&Max\{\pi_{1}(x,y):(x,y)\in supp(\mu_{2})\},\\
y^{**}&=&Max \{\pi_{2}(x,y):\pi_{1}(x,y)=x^{**},(x,y)\in supp(\mu_{2})\}.
\end{eqnarray*}
Then $(x^{**},y^{**})\in supp(\mu_{2})$   and  $supp(\mu_{2}\star \delta_{-(x^{**},y^{**})})\subset Q_{3}(0,\beta_{2}-y^{**}).$ Choose $r>0$ such that $r>2 Max \{|\beta_{1}-y^{*}|,|\beta_{2}-y^{**}|\}.$

Define
\begin{eqnarray*}
A&=&\left\{(x,y)\in supp(\mu_{1}): \pi_{1}(x-x^{*},y-y^{*})\neq 0,  (x-x^{*},y-y^{*})\in \overline{B}(0,r)\right\},\\
B&=&\left\{(x,y)\in supp(\mu_{2}): \pi_{1}(x-x^{**},y-y^{**})\neq 0,  (x-x^{**},y-y^{**})\in \overline{B}(0,r)\right\},\\
s_{1}&=&Max \left\{0,\left|\frac{\pi_{2}(x-x^{*},y-y^{*})}{\pi_{1}(x-x^{*},y-y^{*})}\right|,\left|\frac{\beta_{1}-y^{*}}{\sqrt{r^2-(\beta_{1}-y^{*})^2}}\right|
:(x,y)\in A\right\},\\
s_{2}&=&Max \left\{0,\left|\frac{\pi_{2}(x-x^{**},y-y^{**})}{\pi_{1}(x-x^{**},y-y^{**})}\right|,\left|\frac{\beta_{2}-y^{**}}{\sqrt{r^2-(\beta_{1}-y^{**})^2}}\right|: (x,y)\in B\right\}, {\mbox and}\\
s&=&Max\{s_{1},s_{2}\}.
\end{eqnarray*}
If $s=0,$ take $C=Q_{1}(0,0)$ and $\displaystyle{v=\left(\frac{1}{\sqrt{2}},\frac{1}{\sqrt{2}}\right).}$

If $s>0$, then take $C$ to be the pointed convex cone generated by the vectors $(0,1)$ and $\displaystyle{\left(\frac{1}{\sqrt{1+s^2}},\frac{-s}{\sqrt{1+s^2}}\right).}$
This $C$ has bisector $\displaystyle{v=\left(\frac{1}{2\sqrt{1+s^2}},\frac{\sqrt{1+s^2}-s}{2\sqrt{1+s^2}}\right)}.$
Then $C\subset H_{r}(v,0).$ Clearly $$(x-x^{*},y-y^{*})\in C\cap H_{r}(v,\alpha) \mbox{ for every  }(x,y)\in supp(\mu_{1})\setminus \{(x^{*},y^{*})\}$$ for some $\alpha>0$ and  $$(x-x^{**},y-y^{**})\in (-C)\cap H_{l}(v,-\beta)
\mbox{ for every }(x,y)\in supp(\mu_{2})\setminus\{(x^{**},y^{**})\}$$ for some $\beta>0.$
Therefore we conclude that $$\mu_{1}=c^{*}\delta_{(x^{*},y^{*})}\star (\delta_{0}+\eta_{1}) \mbox{ and } \mu_{2}=c^{**}\delta_{(x^{**},y^{**})}\star (\delta_{0}+\eta_{2}),$$ where  $$supp(\eta_{1})\subset C\cap H_{r}(v,\alpha) \mbox{ and } supp(\eta_{2})\subset (-C)\cap H_{l}(v,-\beta).$$
\end{proof}
\hfil{       \hspace*{4in}$\Box$}

\begin{remark}
 Let $\mu_{1}$ and $\mu_{2}$ be two nonzero discrete measures  such that $supp(\mu_{1})\subset Q_{2}(\alpha_{1},\beta_{1})$ and $supp(\mu_{2})\subset Q_{4}(\alpha_{2},\beta_{2}).$ Then,  along the lines of the above proof, it can be shown by making appropriate changes that the conclusions similar to the above lemma hold.
\end{remark}

\section{Existence Theorem}
\setcounter{equation}{0}

We  consider the following  two broad situations on the rectangular regions  $R_{i}=\left[a_{i},b_{i}\right]\times \left[c_{i},d_{i}\right]$:
\begin{eqnarray}
\label{cond1}
a_{i}<a_{i+1},b_{i}<b_{i+1}, c_{i}<c_{i+1}, d_{i}<d_{i+1}\\
\label{cond2}
a_{i}>a_{i+1},b_{i}>b_{i+1}, c_{i}<c_{i+1}, d_{i}<d_{i+1}
\end{eqnarray}

The set of conditions (\ref{cond1}) and (\ref{cond2})  essentially mean respectively that $R_{i+1}\subset \mbox{Interior}(Q_{1}(a_{i},c_{i}))$ and $ R_{i+1}\subset \mbox{Interior}(Q_{2}(b_{i},c_{i}))$ and $R_{i}$ is not a subset of $R_{j}$ for $i\neq j$.

\begin{theorem}[Main Theorem]Let $0\neq \mu\in M_{c}({\mathbb{R}}^2)$ be absolutely continuous with a density function equal to a finite linear combination of indicator functions of rectangles $\left[a_{i},b_{i}\right]\times \left[c_{i},d_{i}\right]$ in ${\mathbb{R}}^2$ and $g\in C^{2*}({\mathbb{R}}^2)$. If the conditions (\ref{cond1}) hold  for every $i$ or conditions (\ref{cond2}) hold  for every $i$, then
there exists $f \in C({\mathbb{R}}^2)$ such that $f\star\mu=g.$
\label{thm4}
\end{theorem}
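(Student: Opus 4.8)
The plan is to convert the integral equation $f\star\mu=g$ into a \emph{discrete} convolution equation by differentiating, and then to invert the resulting finitely supported measure by a Neumann-type series whose convergence is forced by the cone geometry of Lemma~\ref{lemnew1}. For a single rectangle $R=[a,b]\times[c,d]$ the fundamental theorem of calculus applied in each variable gives
\[
\frac{\partial^2}{\partial x\partial y}(f\star\chi_{R})(x,y)=f(x-a,y-c)-f(x-a,y-d)-f(x-b,y-c)+f(x-b,y-d),
\]
which is exactly $f\star\nu_R$ for the signed-corner measure $\nu_R=\delta_{(a,c)}-\delta_{(a,d)}-\delta_{(b,c)}+\delta_{(b,d)}\in M_{cd}(\mathbb R^2)$. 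Summing over the rectangles, if $\mu$ has density $\sum_i\alpha_i\chi_{R_i}$ then $\partial^2(f\star\mu)/\partial x\partial y=f\star\nu$ with $\nu=\sum_i\alpha_i\nu_{R_i}$ a nonzero compactly supported discrete measure. Writing $\tilde g=\partial^2 g/\partial x\partial y\in C(\mathbb R^2)$ (this is where $g\in C^{2*}$ is used), it therefore suffices to build $f\in C(\mathbb R^2)$ solving $f\star\nu=\tilde g$ and then to recover $f\star\mu=g$ from it.

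Under condition (\ref{cond1}) the monotonicity of the $a_i,b_i,c_i,d_i$ forces $a_1$ to be the unique smallest $x$-coordinate among all corners and $c_1$ the smallest $y$-coordinate attained there, so $(a_1,c_1)$ carries the coefficient $\alpha_1\neq0$ and \emph{every} point of $\operatorname{supp}\nu$ lies in $Q_1(a_1,c_1)$; translating, $\nu\star\delta_{-(a_1,c_1)}$ is supported in $Q_1(0,0)$ with mass $\alpha_1$ at the origin, giving $\nu=\alpha_1\,\delta_{(a_1,c_1)}\star(\delta_0+\eta_+)$ with $\operatorname{supp}\eta_+\subset Q_1(0,0)\setminus\{0\}$. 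Symmetrically $(b_k,d_k)$ is an extreme top-right corner with coefficient $\alpha_k\neq0$ and $\nu=\alpha_k\,\delta_{(b_k,d_k)}\star(\delta_0+\eta_-)$ with $\operatorname{supp}\eta_-\subset Q_3(0,0)\setminus\{0\}$. These are precisely the factorizations produced by Lemma~\ref{lemma2.5}; under (\ref{cond2}) one uses instead the $Q_2/Q_4$ version noted in the Remark following it. Taking $C=Q_1(0,0)$ with bisector $v=(1/\sqrt2,1/\sqrt2)$, the finite sets $\operatorname{supp}\eta_\pm$ satisfy $|v_1x+v_2y|\ge\rho>0$ there, which is the crucial cone separation.

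Because $\eta_+^{\star n}$ is then supported in $H_r(v,n\rho)$ and marches to infinity, $\sum_{n\ge0}(-1)^n\eta_+^{\star n}$ is a locally finite discrete measure in $M_d(\mathbb R^2)$ supported in $C$ and inverts $\delta_0+\eta_+$ by telescoping; similarly for $\eta_-$ in $-C$. Setting
\[
\nu_+^{-1}=\tfrac1{\alpha_1}\,\delta_{-(a_1,c_1)}\star\sum_{n\ge0}(-1)^n\eta_+^{\star n},\qquad \nu_-^{-1}=\tfrac1{\alpha_k}\,\delta_{-(b_k,d_k)}\star\sum_{n\ge0}(-1)^n\eta_-^{\star n},
\]
one has $\nu\star\nu_\pm^{-1}=\delta_0$. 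Now split $g=g_1+g_2$ by Lemma~\ref{lemma2.4} (in the $C^{2*}$ version noted after it) with $\operatorname{supp}g_1\subset H_r(v,\alpha_1')$ and $\operatorname{supp}g_2\subset H_l(v,\alpha_2')$, put $\tilde g_i=\partial^2 g_i/\partial x\partial y$, and define $f:=\tilde g_1\star\nu_+^{-1}+\tilde g_2\star\nu_-^{-1}$. By Lemma~\ref{lemnew1} the right-half-space datum $\tilde g_1$ may be convolved with the cone-$C$-supported $\nu_+^{-1}$ and $\tilde g_2$ with the $(-C)$-supported $\nu_-^{-1}$; each is a locally finite sum of translates of a continuous function, so $f\in C(\mathbb R^2)$, and associativity (legitimate under these support constraints) yields $f_i\star\nu=\tilde g_i$, hence $f\star\nu=\tilde g$.

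It remains to pass from $\nu$ back to $\mu$. From $\partial^2(f_i\star\mu)/\partial x\partial y=f_i\star\nu=\partial^2 g_i/\partial x\partial y$ we get $f_i\star\mu-g_i=\phi(x)+\psi(y)$; since $f_i\star\mu$ and $g_i$ are both supported in a common half-space with respect to $v$ (the convolution of a half-space--supported set with a cone--supported set is again half-space supported), the function $\phi(x)+\psi(y)$ vanishes off a half-space, which forces $\phi,\psi$ to be constant with $\phi+\psi\equiv0$, so $f_i\star\mu=g_i$ and $f\star\mu=g$. I expect the main obstacle to be exactly this last step together with the well-definedness underlying the inversion: one must justify that the infinite discrete measures $\nu_\pm^{-1}$ can be convolved and freely re-associated against data that is only half-space supported (not compactly supported), and that the mixed-partial identity determines $f\star\mu$ only up to an additive $\phi(x)+\psi(y)$ that the support geometry then annihilates. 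The hypotheses (\ref{cond1})/(\ref{cond2}) enter precisely to guarantee a single \emph{uncancelled} extreme corner, so that the leading $\delta$ can be factored out and the cone separation needed for convergence is available.
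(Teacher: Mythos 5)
Your proposal is correct, but it takes a genuinely different route from the paper. The paper never forms your signed-corner measure $\nu=\sum_i\alpha_i\bigl(\delta_{(a_i,c_i)}-\delta_{(a_i,d_i)}-\delta_{(b_i,c_i)}+\delta_{(b_i,d_i)}\bigr)$; instead it keeps the integral equation intact and sets up auxiliary partial \emph{difference} equations $g_{1}\star\nu_{1}=h_{1}$ whose measures are already infinite sums of deltas of the form $\sum_{j,k}\delta_{(-M_{i}\mp(2j+1)S_{i},\,-N_{i}\mp(2k+1)T_{i})}$ (these encode the inverses of the individual rectangle convolutions acting on $\partial^2/\partial x\partial y$), solves them by a function-level Neumann series (its Lemma \ref{lemma3.1}), and then defines $f=\frac{\partial^{2}g_{1}}{\partial x\partial y}\star\mu_{1}$, verifying $f\star\alpha_i\chi_{R_i}=g_i$ for each $i$ by explicit telescoping integrals, so that $f\star\mu=g$ holds identically with no further argument. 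Your differentiation-first reduction buys two real simplifications: the measure to invert is \emph{finitely} supported, so under (\ref{cond1}) the uncancelled extreme corners $(a_1,c_1)$ and $(b_n,d_n)$ dominate the whole support within a quadrant and the quadrant itself serves as the cone --- you never need the tilted cone that Lemma \ref{lemma2.5} must construct for the paper's infinite measures; and all $n$ rectangles are handled at once, with no decomposition $g=g_1+\cdots+g_n$. The price is exactly the step you flag: since you only solve the differentiated equation $f\star\nu=\partial^{2}g/\partial x\partial y$, you must rule out the kernel of $\partial^{2}/\partial x\partial y$, i.e.\ show $f_i\star\mu-g_i=\phi(x)+\psi(y)$ and then kill it by support geometry. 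That argument does close: both terms are supported in a common half-space $H_r(v,\cdot)$ (resp.\ $H_l$), and a function of the form $\phi(x)+\psi(y)$ vanishing on a half-space with bisector $v=(1/\sqrt2,1/\sqrt2)$ forces $\phi,\psi$ constant with $\phi+\psi\equiv 0$; the paper's direct construction simply never incurs this debt. The associativity and local-finiteness points you raise are handled the same way the paper handles its own (Lemma \ref{lemnew1}-type locally finite sums), so they are loose ends of exposition, not gaps.
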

We  prove the above theorem using  the following lemma.

\begin{lemma}
Let $C$ be a pointed convex cone in ${\mathbb{R}}^2$ with $v=(v_{1},v_{2})$ as the unit vector along the bisector of $C.$
Then for $\mu\in M_{d}({\mathbb{R}}^2)$ and $g\in C^{2*}({\mathbb{R}}^2),$ the following hold:
\begin{enumerate}
\item  If $supp(g)\subset H_{r}(v,\alpha)$ for some $\alpha\in {\mathbb{R}}$ and $supp(\mu)\subset C\cap H_{r}(v,\beta)$ for some $\beta>0$, then  there exits $f\in C^{2*}({\mathbb{R}}^2)$ such that $f\star(\delta_{0}+\mu)=g$ and $supp(f)\subset H_{r}(v,\alpha')$ for some $\alpha'\in {\mathbb{R}}.$

\item  If $supp(g)\subset H_{l}(v,\alpha)$ for some $\alpha\in {\mathbb{R}}$ and $supp(\mu)\subset (-C)\cap H_{l}(v,-\beta)$ for some $\beta>0$, then  there exits $f\in C^{2*}({\mathbb{R}}^2)$ such that $f\star(\delta_{0}+\mu)=g$ and $supp(f)\subset H_{l}(v,\alpha')$ for some $\alpha'\in {\mathbb{R}}.$

 \end{enumerate}
\label{lemma3.1}
\end{lemma}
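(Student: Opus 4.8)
The plan is to prove part (1); part (2) will follow by the symmetric argument under the reflection that swaps $C$ and $-C$ and reverses the half-space inequalities. The central idea is that since $\mathrm{supp}(\mu)\subset C\cap H_r(v,\beta)$ with $\beta>0$, the measure $\mu$ is supported strictly to the right of the bisector level $0$, so the operator $\delta_0+\mu$ acts like a triangular (Volterra-type) operator and can be inverted by a geometric Neumann series. Formally I would set
\[
\nu = \sum_{n=0}^{\infty}(-1)^n \mu^{\star n},
\]
where $\mu^{\star 0}=\delta_0$ and $\mu^{\star n}$ denotes the $n$-fold convolution, and then define $f=g\star\nu$. The desired identity $f\star(\delta_0+\mu)=g$ will be a formal consequence of $(\delta_0+\mu)\star\nu=\delta_0$, which telescopes since $(\delta_0+\mu)\star\sum_{n=0}^{\infty}(-1)^n\mu^{\star n}=\sum_{n\ge 0}(-1)^n\mu^{\star n}+\sum_{n\ge 1}(-1)^{n-1}\mu^{\star n}=\delta_0$.

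The first substantive step is to verify that $\nu$ is a well-defined element of $M_d(\mathbb{R}^2)$ and that the convolution $g\star\nu$ makes sense. Because $C$ is a convex cone, $\mathrm{supp}(\mu^{\star n})\subset C$, and since convolution adds the bisector-coordinates, $\mathrm{supp}(\mu^{\star n})\subset C\cap H_r(v,n\beta)$. Thus on any bounded region only finitely many terms $\mu^{\star n}$ can contribute, which simultaneously shows $\nu\in M_d(\mathbb{R}^2)$ (each slab $\{k\beta\le v\cdot(x,y)<(k+1)\beta\}$ meets the support of only finitely many $\mu^{\star n}$, and each of those is a finite discrete measure) and that the series is locally finite. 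The second step is to apply Lemma~\ref{lemnew1}: since $\mathrm{supp}(g)\subset H_r(v,\alpha)$ and $\mathrm{supp}(\nu)\subset C\subset H_r(v,0)$, the hypotheses of that lemma are met (with cone $C$ and the half-space condition on $g$), so $g\star\nu$ is well-defined at every point and, crucially, $g\in C^{2*}(\mathbb{R}^2)$ forces the mixed second derivatives of $f=g\star\nu$ to equal $(\partial^2 g/\partial x\partial y)\star\nu$ and $(\partial^2 g/\partial y\partial x)\star\nu$, both continuous; hence $f\in C^{2*}(\mathbb{R}^2)$.

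The support claim $\mathrm{supp}(f)\subset H_r(v,\alpha')$ follows from the additivity of supports under convolution: $\mathrm{supp}(g\star\nu)\subset \mathrm{supp}(g)+\mathrm{supp}(\nu)\subset H_r(v,\alpha)+C\subset H_r(v,\alpha)$, so one may take $\alpha'=\alpha$. The main obstacle I anticipate is making the formal telescoping rigorous as an identity of measures \emph{acting on} $g$, i.e. justifying the interchange $g\star((\delta_0+\mu)\star\nu)=(g\star(\delta_0+\mu))\star\nu$ and the rearrangement of the infinite sum. This is where the strict positivity $\beta>0$ is essential: it guarantees that the partial sums $\nu_N=\sum_{n=0}^N(-1)^n\mu^{\star n}$ stabilize locally, so that on any fixed bounded set $g\star\nu$ and $g\star\nu_N$ agree for $N$ large, and associativity holds termwise for the finitely many relevant terms. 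I would therefore carry out the telescoping at the level of the finite partial sums $\nu_N$, where $(\delta_0+\mu)\star\nu_N=\delta_0+(-1)^N\mu^{\star(N+1)}$, observe that the remainder $(-1)^N\mu^{\star(N+1)}$ is supported in $H_r(v,(N+1)\beta)$ and hence eventually leaves any bounded region, and conclude $f\star(\delta_0+\mu)=g$ pointwise.
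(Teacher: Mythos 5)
Your proposal is correct and takes essentially the same route as the paper: the paper's proof defines $f = g + \sum_{k\ge 1}(-1)^k\,g\star\mu^{k}$, which is precisely your $f = g\star\nu$ with $\nu=\sum_{n\ge 0}(-1)^n\mu^{\star n}$, and both arguments hinge on $\mathrm{supp}(\mu^{\star n})\subset C\cap H_{r}(v,n\beta)$ forcing the series to stabilize on every compact set. The paper carries out the termwise differentiation of the partial sums directly (where you invoke Lemma~\ref{lemnew1}) and leaves the telescoping identity $f\star(\delta_0+\mu)=g$ and the support claim as easy verifications, both of which you justify by the same local-finiteness mechanism.
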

\begin{proof}1. We denote the m-fold convolution of $\mu$ with itself by
$\mu^m$. As  $supp(\mu)\subset C\cap H_{r}(v,\beta),$ we have
$supp(\mu^n)\subset C\cap H_{r}(v,n\beta).$

\noindent Now
\begin{eqnarray*}
\left(\frac{\partial^2 g}{\partial x \partial y}\star
\mu^n\right)(x,y)&=&\int_{{\mathbb{R}}^2} \frac{\partial^2 g}{\partial x \partial y}(x-s,y-t)d\mu^{n}\\
          &=& \int_{C\cap H_{r}(v,n\beta)} \frac{\partial^2 g}{\partial x \partial y}(x-s,y-t)d\mu^{n}.
\end{eqnarray*}
Also, if  $(s,t)\in H_{r}(v,n\beta),$ then  $(x-s)v_{1}+(y-t)v_{2}\le xv_{1}+yv_{2}-n\beta<\alpha$   for sufficiently large $n$. Therefore, for
every $(x,y)\in {\mathbb{R}}^2,$ we have $(\frac{\partial^2 g}{\partial x \partial y}\star\mu^n)(x,y)=0$ for $n$ suitably large  and for every $n,$  $(\frac{\partial^2 g}{\partial x \partial y}\star\mu^n)(x,y)=0$ for all $(x,y)\in H_{l}(v, \alpha+n\beta).$

\noindent We  define
\begin{equation}
f(x,y):=\left(
g(x,y)+\sum_{k=1}^{\infty} (-1)^k\left(g\star\mu^k\right)\right)(x,y).
\label{thm1eqn1}
\end{equation}

\noindent
Let us consider  the  following partial
sums:

$$s_{n}(x,y)= \left(
g(x,y)+\sum_{k=1}^{n} (-1)^k(g\star\mu^k)\right)(x,y).$$

\noindent Then

$$\frac{\partial^2 s_{n}}{\partial x \partial y}(x,y)= \left(
\frac{\partial^2 g}{\partial x \partial y}(x,y)+\sum_{k=1}^{n} (-1)^k\left(\frac{\partial^2 g}{\partial x \partial y}\star\mu^k\right)\right)(x,y).$$
We show that the above sequence converges uniformly on every
compact set of ${\mathbb{R}}^2$. Let K be a compact subset of ${\mathbb{R}}^2.$ Then
$K\subset [a,b]\times [c,d]$ for some real numbers $a,b,c$ and $d$.

The function $(x,y)\mapsto xv_{1}+yv_{2}$ is  continuous  on ${\mathbb{R}}^2$ and hence it attains its maximum at some point $(x_{0},y_{0})$ on $K.$
Choose $N$
such that $x_{0}v_{1}+ y_{0}v_{2}-n\beta<\alpha$ for $n\ge N.$
 Then

$$ \frac{\partial^2 s_{n}}{\partial x \partial y}(x,y)- \frac{\partial^2 s_{k}}{\partial x \partial y}(x,y)=\left(\sum_{j=k+1}^n
(-1)^j\left(\frac{\partial^2 g}{\partial x \partial y}\star\mu^j\right)\right)(x,y)=0,$$ for $n\ge k\ge N.$

\noindent
Thus the sequence of functions
$\displaystyle{\left\{\frac{\partial^2 s_{k}}{\partial x \partial y}(x,y)\right\}}$ is uniformly Cauchy on every compact set and hence converges uniformly on every compact subset of ${\mathbb{R}}^2$.

As the partial sums  $s_{k}(x,y)$ and their derivatives $\frac{\partial^2 s_{k}}{\partial x \partial y}(x,y)$ of  the  series \ref{thm1eqn1} converge uniformly on compact sets,  we get
$$\frac{\partial^2 f}{\partial x \partial y}(x,y):=\left(
\frac{\partial^2 g}{\partial x \partial y}(x,y)+\sum_{n=1}^{\infty} (-1)^n\left(\frac{\partial^2 g}{\partial x \partial y}\star\mu^n\right)\right )(x,y).$$

\noindent Therefore $\frac{\partial^2 f}{\partial x \partial y}$ is continuous and hence $ f\in C^{2*}({\mathbb{R}}^2).$
 It is very easy to check  that  $$f\star(\delta_0+\nu)=g $$ and hence $f\star\mu=g.$ From the definition of $f$ it is simple to verify that $supp(f)\subset H_r(v,\alpha')$ for some $\alpha'\in {\mathbb{R}}$.

\vskip 1em
 The other  case  can also be proved using  similar arguments.
\end{proof}
\hfill{$\square$}

First we shall prove a special case of theorem 3.1.

\begin{theorem}
Let  $\mu\in M_{c}({\mathbb{R}}^2)$ be  absolutely continuous with a density function in the form $\alpha_{1}\chi_{R_{1}}+\alpha_{2}\chi_{R_{2}}$, where $R_{i}=\left[a_{i}, b_{i}\right]\times \left[c_{i}, d_{i}\right]$  satisfying $\alpha_{1},\alpha_{2}\neq 0$ and set conditions (\ref{cond1}) or (\ref{cond2}). Then for every
 $g\in C^{2*}({\mathbb{R}}^2),$ there exists $f\in C^{2*}({\mathbb{R}}^{2})$ such that $f\star\mu=g.$
 \label{thm2}
\end{theorem}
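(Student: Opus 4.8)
The plan is to factor the absolutely continuous convolver $\mu$ into an integration factor and a finitely supported factor, and to invert each with the tools already at hand. The key computation is that for a rectangle $R=[a,b]\times[c,d]$ and any $f\in C(\mathbb{R}^2)$, writing $(f\star\chi_R)(x,y)=\int_{x-b}^{x-a}\int_{y-d}^{y-c}f(p,q)\,dq\,dp$ and differentiating the limits shows $f\star\chi_R\in C^{2*}(\mathbb{R}^2)$ with
\[
\frac{\partial^2}{\partial x\partial y}(f\star\chi_R)=f\star(\delta_{(a,c)}-\delta_{(b,c)}-\delta_{(a,d)}+\delta_{(b,d)}).
\]
Equivalently, with $H=\chi_{Q_1(0,0)}$ one has $\chi_R=H\star\nu_R$, where $\nu_R$ is the alternating sum of unit masses at the four corners of $R$. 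Summing over the two rectangles gives $\mu=H\star\nu$ with $\nu=\alpha_1\nu_{R_1}+\alpha_2\nu_{R_2}\in M_{cd}(\mathbb{R}^2)$ supported on the eight corner points. Inverting $\mu$ thus amounts to inverting the discrete factor $\nu$ together with the integration factor $H$, the latter being undone by a mixed second derivative.

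The structural heart is a consequence of conditions (\ref{cond1}): the inequalities $a_1<a_2$, $b_1<b_2$, $c_1<c_2$, $d_1<d_2$ make $(a_1,c_1)$ the common bottom-left corner and $(b_2,d_2)$ the common top-right corner, so that $supp(\nu)\subset Q_1(a_1,c_1)$ and $supp(\nu)\subset Q_3(b_2,d_2)$ at the same time. I then apply Lemma~\ref{lemma2.5} with $\mu_1=\mu_2=\nu$: it furnishes a pointed convex cone $C$ with bisector $v$, the extreme points $(x^\ast,y^\ast)=(a_1,c_1)$ and $(x^{\ast\ast},y^{\ast\ast})=(b_2,d_2)$, and the two factorisations $\nu=c^\ast\delta_{(a_1,c_1)}\star(\delta_0+\eta_1)=c^{\ast\ast}\delta_{(b_2,d_2)}\star(\delta_0+\eta_2)$ with $supp(\eta_1)\subset C\cap H_r(v,\alpha')$ and $supp(\eta_2)\subset(-C)\cap H_l(v,-\beta')$ for some $\alpha',\beta'>0$. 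Using $H\star\delta_{(p,q)}=\chi_{Q_1(p,q)}$ this transfers to the working identity
\[
\mu=c^\ast\chi_{Q_1(a_1,c_1)}\star(\delta_0+\eta_1)=c^{\ast\ast}\chi_{Q_1(b_2,d_2)}\star(\delta_0+\eta_2).
\]

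Next I split the datum. By Lemma~\ref{lemma2.4}, applied with this $v$, write $g=g_1+g_2$ with $g_1,g_2\in C^{2*}(\mathbb{R}^2)$, $supp(g_1)\subset H_r(v,\alpha_1)$ and $supp(g_2)\subset H_l(v,\alpha_2)$. Reading $f_1\star\mu=g_1$ through the first factorisation as $F_1\star(\delta_0+\eta_1)=g_1$ with $F_1=c^\ast f_1\star\chi_{Q_1(a_1,c_1)}$, Lemma~\ref{lemma3.1}(1) applies, since $g_1$ is a right-half datum and $\eta_1$ is supported strictly inside the cone; it produces $F_1\in C^{2*}(\mathbb{R}^2)$ with $supp(F_1)\subset H_r(v,\alpha_1'')$. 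I then define $f_1(x,y)=\frac{1}{c^\ast}\frac{\partial^2 F_1}{\partial x\partial y}(x+a_1,y+c_1)$, which is again right-half supported. The left piece $g_2$ is treated symmetrically through the second factorisation and Lemma~\ref{lemma3.1}(2), producing $f_2$, and I set $f=f_1+f_2$.

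Finally I verify that the derivative-defined functions solve the equations. By construction $\frac{\partial^2}{\partial x\partial y}(c^\ast f_1\star\chi_{Q_1(a_1,c_1)}-F_1)=0$, so this difference has the form $A(x)+B(y)$; as both terms vanish on a common left-half-space $\{v_1x+v_2y<c_0\}$ with $v_1,v_2>0$, one checks that $A$ and $B$ must be constant and the difference vanishes, giving $c^\ast f_1\star\chi_{Q_1(a_1,c_1)}=F_1$ and hence $f_1\star\mu=F_1\star(\delta_0+\eta_1)=g_1$ (Lemma~\ref{lemnew1} justifies moving the derivatives through the convolutions). The same reasoning gives $f_2\star\mu=g_2$, so $f\star\mu=g$. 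The case of conditions (\ref{cond2}) is identical after replacing the pair $Q_1,Q_3$ by $Q_2,Q_4$ and orienting the bisector along $(-1,1)/\sqrt{2}$, as sanctioned by the Remark following Lemma~\ref{lemma2.5}. I expect the genuine obstacle to be precisely this structural step: confirming that (\ref{cond1}), respectively (\ref{cond2}), produces a single extreme corner in each direction, so that $\nu$ splits as $\delta_0$ plus mass lying strictly inside a pointed cone, which is exactly the hypothesis Lemma~\ref{lemma3.1} demands; the regularity class of $f$ is then read off from the final differentiation $f_i=\frac{1}{c^\ast}\frac{\partial^2 F_i}{\partial x\partial y}(\cdot+(x^\ast,y^\ast))$ with $F_i\in C^{2*}(\mathbb{R}^2)$.
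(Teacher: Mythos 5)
Your route is genuinely different from the paper's, and in two respects cleaner. The paper never forms the corner measure $\nu$: it inverts each rectangle indicator separately, composing $\frac{\partial^{2}}{\partial x\partial y}$ with one-sided \emph{infinite} discrete sums such as $\frac{1}{\alpha_{1}}\sum_{j,k\ge 0}\delta_{(-M_{1}-(2j+1)S_{1},\,-N_{1}-(2k+1)T_{1})}$, and reduces the theorem to auxiliary difference equations (\ref{thm2eqn2}) and (\ref{thm2eqn21}) whose convolvers $\nu_{1},\nu_{2}$ are infinite discrete measures; showing $\nu_{1},\nu_{2}\neq 0$ there requires a separate test-function argument. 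In your factorisation $\mu=\chi_{Q_{1}(0,0)}\star\nu$ the measure $\nu$ is finite, its nonvanishing is immediate (the extreme corners $(a_{1},c_{1})$ and $(b_{2},d_{2})$ belong to only one rectangle each, so their coefficients $\alpha_{1},\alpha_{2}$ cannot cancel), and Lemma \ref{lemma2.5} does select exactly these points, with a bisector whose components are strictly positive. The right-half piece of your argument is correct as written, including the step where $\frac{\partial^{2}}{\partial x\partial y}\bigl(c^{*}f_{1}\star\chi_{Q_{1}(a_{1},c_{1})}-F_{1}\bigr)=0$ forces the difference $A(x)+B(y)$ to vanish because both terms are supported in a common right half-plane.

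The genuine gap is in the left-half piece, and it is not a notational slip. You invert through $\mu=c^{**}\chi_{Q_{1}(b_{2},d_{2})}\star(\delta_{0}+\eta_{2})$, which requires forming $F_{2}=c^{**}f_{2}\star\chi_{Q_{1}(b_{2},d_{2})}$ for the left-half supported $f_{2}$ you construct. But
\[
\bigl(f_{2}\star\chi_{Q_{1}(b_{2},d_{2})}\bigr)(x,y)=\int_{-\infty}^{x-b_{2}}\int_{-\infty}^{y-d_{2}}f_{2}(p,q)\,dq\,dp ,
\]
and this lower-left quadrant of integration meets $supp(f_{2})\subset H_{l}(v,\alpha_{2}'')$ in an \emph{unbounded} set, on which a merely continuous $f_{2}$ need not be integrable; the convolution diverges in general, so ``the same reasoning'' does not deliver $f_{2}\star\mu=g_{2}$. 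The repair is to record the dual identity $\chi_{R}=\chi_{Q_{3}(0,0)}\star\nu_{R}$ (integration from the top-right corner, with the \emph{same} corner measure $\nu_{R}$), which also represents the density of $\mu$ a.e., and to use $\mu=c^{**}\chi_{Q_{3}(b_{2},d_{2})}\star(\delta_{0}+\eta_{2})$ for the left piece: then $\bigl(f_{2}\star\chi_{Q_{3}(b_{2},d_{2})}\bigr)(x,y)=\int_{x-b_{2}}^{\infty}\int_{y-d_{2}}^{\infty}f_{2}(p,q)\,dq\,dp$ has compact effective domain and your argument runs symmetrically. This directional matching --- down-left inversion for left-supported data, up-right inversion for right-supported data --- is exactly what the paper builds in by introducing the two families $\nu_{1},\psi_{1}$ and $\nu_{2},\psi_{2}$ and treating the two cases by different equations. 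One further caveat, shared with the paper's own proof: your $f=f_{1}+f_{2}$ is exhibited only as a continuous function (a mixed second derivative of $C^{2*}$ functions), so the $C^{2*}$ regularity asserted in the statement is not actually established by either construction.
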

\begin{proof}

We can write  the density function of $\mu$ as $$\alpha_{1}\chi_{[-S_{1},S_{1}]\times [-T_{1},T_{1}]}\star \delta_{(M_{1},N_{1})}+\alpha_{2}\chi_{[-S_{2},S_{2}]\times [-T_{2}, T_{2}]}\star \delta_{(M_{2},N_{2})}.$$
Suppose that  $a_{1}<a_{2},b_{1}<b_{2},c_{1}<c_{2}, d_{1}<d_{2}.$

\noindent Consider the following linear partial  difference equations with constant coefficients:
\begin{eqnarray*}
\frac{1}{\alpha_{1}}\left[\sum_{j,k=0}^{\infty}g_{1}\left(x+M_{1}+(2j+1)S_{1}, y+N_{1}+(2k+1)T_{1} \right)\right ]
&+& \frac{1}{\alpha_{2}}\left[\sum_{j,k=0}^{\infty}g_{1}\left(x+M_{2}+(2j+1)S_{2},y+N_{2}+(2k+1)T_{2}\right)\right ] \\ &=&\frac{1}{\alpha_{2}}\left[\sum_{j,k=0}^{\infty}g\left(x+M_{2}+(2j+1)S_{2},y+N_{2}+(2k+1)T_{2}\right)\right ].\\
\label{thm2eqn1}
\end{eqnarray*}

\begin{eqnarray*}
\frac{1}{\alpha_{1}}\left[\sum_{j,k=0}^{\infty}g_{1}\left(x+M_{1}-(2j+1)S_{1}, y+N_{1}-(2k+1)T_{1} \right)\right ]
&+& \frac{1}{\alpha_{2}}\left[\sum_{j,k=0}^{\infty}g_{1}\left(x+M_{2}-(2j+1)S_{2},y+N_{2}-(2k+1)T_{2}\right)\right ] \\ &=&\frac{1}{\alpha_{2}}\left[\sum_{j,k=0}^{\infty}g\left(x+M_{2}-(2j+1)S_{2},y+N_{2}-(2k+1)T_{2}\right)\right ].\\
\label{thm2eqn12}
\end{eqnarray*}

\noindent We note that the above equations make sense only for a suitable class of functions $g.$ These equations can be written as
\begin{equation}
g_{1}\star\nu_{1}=h_{1},
\label{thm2eqn2}
\end{equation}
\begin{equation}
g_{2}\star\nu_{2}=h_{2},
\label{thm2eqn21}
\end{equation}

where
\begin{eqnarray}
\nu_{1}&=&\frac{1}{\alpha_{1}}\left[\sum_{j,k=0}^{\infty}\delta_{(-M_{1}-(2j+1)S_{1}, -N_{1}-(2k+1)T_{1})}\right ] + \frac{1}{\alpha_{2}}\left[\sum_{j,k=0}^{\infty}\delta_{(-M_{2}-(2j+1)S_{2}, -N_{2}-(2k+1)T_{2} }\right ],
\\
\nu_{2}&=&\frac{1}{\alpha_{1}}\left[\sum_{j,k=0}^{\infty}\delta_{(-M_{1}+(2j+1)S_{1}, -N_{1}+(2k+1)T_{1})}\right ] + \frac{1}{\alpha_{2}}\left[\sum_{j,k=0}^{\infty}\delta_{(-M_{2}+(2j+1)S_{2}, -N_{2}+(2k+1)T_{2} }\right ],
\\
h_{1}(x,y) &=& \frac{1}{\alpha_{2}}\left[\sum_{j,k=0}^{\infty}g\left(x+M_{2}+(2j+1)S_{2}, y+N_{2}+(2k+1)T_{2}\right)\right ],\\
h_{2}(x,y) &=& \frac{1}{\alpha_{2}}\left[\sum_{j,k=0}^{\infty}g\left(x+M_{2}-(2j+1)S_{2}, y+N_{2}-(2k+1)T_{2}\right)\right ].
\end{eqnarray}

\noindent Clearly $\nu_{1},\nu_{2}\in M_{d}({\mathbb{R}}^{2})$ and  $supp(\nu_{1})\subset Q_{3}(\alpha^{**},\beta^{**})$ and $supp(\nu_{2})\subset Q_{1}(\alpha^{*},\beta^{*})$, where $\alpha^{**}=max\{-M_{1}-S_{1},-M_{2}-S_{2}\},$ $\beta^{**}= max\{-N_{1}-T_{1},-N_{2}-T_{2}\},$ $\alpha^{*}=min\{-M_{1}+S_{1},-M_{2}+S_{2}\}$  and $\beta^{*}=min\{-N_{1}+T_{1},-N_{2}+T_{2}\}.$

We can write
\begin{eqnarray*}
h_{1}(x,y)&=&(g\star\psi_{1})(x,y)\\
h_{2}(x,y)&=&(g\star\psi_{2})(x,y), \mbox{ where}\\
\psi_{1}&=&\frac{1}{\alpha_{2}}\sum_{j,k=0}^{\infty}\delta_{(-M_{2}-(2j+1)S_{2}, -N_{2}-(2k+1)T_{2}) },\\
\psi_{2}&=&\frac{1}{\alpha_{2}}\sum_{j,k=0}^{\infty}\delta_{(-M_{2}+(2j+1)S_{2}, -N_{2}+(2k+1)T_{2} )}.
\end{eqnarray*}

It may be noted that some subtle combinations might lead to   $\nu_{1}=0$ and  $\nu_{2}=0.$
We shall  show that under the assumptions in the hypothesis, $\nu_{1},\nu_{2}\neq 0.$ We note that $$supp\left(\frac{1}{\alpha_{1}}\left[\sum_{j,k=0}^{\infty}\delta_{(-M_{1}-(2j+1)S_{1},-N_{1}-(2k+1)T_{1})}\right ]\right)\subset Q_{3}(-M_{1}-S_{1},-N_{1}-T_{1})$$ and
$$supp\left(\frac{1}{\alpha_{2}}\left[\sum_{j,k=0}^{\infty}\delta_{(-M_{2}-(2j+1)S_{2},-N_{2}-(2k+1)T_{2})}\right ]\right)\subset Q_{3}(-M_{2}-S_{2},-N_{2}-T_{2}).$$

\noindent
 As $b_{1}<b_{2},$ we have $\max\{-M_{1}-S_{1}, -M_{2}-S_{2} \}=-M_{1}-S_{1}=-b_{1}=m(\mbox{say})$ and hence
 we can find  $\epsilon_{1}>0$ such that $$-M_{1}-S_{1}\in (m-\epsilon_{1},m+\epsilon_{1}),\;\; -M_{2}-S_{2}\not\in (m-\epsilon_{1},m+ \epsilon_{1})\mbox{ and } -M_{1}-2S_{1}<m-\epsilon_{1}.$$
  \noindent
  Since $d_{1}<d_{2},$ $ \max\{-N_{1}-T_{1}, -N_{2}-T_{2} \}=-N_{1}-T_{1}=-d_{1}=n(\mbox{say})$. Hence as above we can find $\epsilon_{2}>0$ such that $$-N_{1}-T_{1}\in (n-\epsilon_{2},n+\epsilon_{2}),\;\; -N_{2}-T_{2}\not\in (n-\epsilon_{2},n+ \epsilon_{2})\mbox{ and } -N_{1}-2T_{1}<n-\epsilon_{2}.$$
  Choose a continuous function $\phi$ such that $\phi(x,y)=0 $ for $(x,y)$ in the complement of the set  $(m-\epsilon_{1},m+\epsilon_{1})\times (n-\epsilon_{2},n+\epsilon_{2})$  and $\phi(m,n)=1.$ Then $\int_{{\mathbb{R}}^2}\phi(x,y)d\nu_{1}=  \frac{1}{\alpha_{1}}$ and
 therefore $\nu_{1}\neq 0.$

 By similar arguments, using the conditions $a_{1}<a_{2}, c_{1}<c_{2},$ we can prove that $\nu_{2}\neq 0.$

\noindent
 Using lemma \ref{lemma2.5}, there is a pointed convex cone $C$ with a bisector $v$ and $(x^{**},y^{**})\in \mbox{supp}(\nu_{1}), (x^{*},y^{*})\in \mbox{supp}(\nu_{2})$ such that $\nu_{1}=c^{**}\delta_{(x^{**},y^{**})}\star(\delta_{0} +\eta_{1})$ and $\nu_{2}=c^{*}\delta_{(x^{*},y^{*})}\star(\delta_{0} +\eta_{2})$ and $\mbox{supp}(\eta_{1})\subset H_{l}(v,-\beta)\cap (-C)$ and $\mbox{supp}(\eta_{2})\subset H_{r}(v,\alpha)\cap C$ with $\alpha, \beta>0$ and $C\subset H_{r}(v,0).$

 In view of lemma \ref{lemma2.4}, it suffices  to prove the theorem for the cases: $supp(g)\subset H_{r}(v,a)$ and   $supp(g)\subset H_{l}(v,b).$

 \vskip 2em
 \noindent
  Case(i): Suppose that $supp(g)\subset H_{l}(v,b).$

 \vskip 1em
 In this case equation \ref{thm2eqn2} makes sense and $h_{1}\in C^{2*}({\mathbb{R}}^2)$ with $supp(h_{1})\subset H_{l}(v,\alpha')$ for some $\alpha'\in {\mathbb{R}}.$

 By lemma \ref{lemma3.1}, equation \ref{thm2eqn2} has a solution $g_{1}\in C^{2*}({\mathbb{R}}^2)$ such that $supp(g_{1})\subset H_{l}(v,\alpha'')$ for some $\alpha''\in {\mathbb{R}}.$  We use this $g_{1}$ to construct a solution $f.$

\noindent
Define $$f(x,y)=\left(\frac{\partial^{2}g_{1}}{\partial x\partial y}\star \left(\frac{1}{\alpha_{1}}\sum_{j,k=0}^{\infty}\delta_{(-M_{1}-(2j+1)S_{1},-N_{1}-(2k+1)T_{1})}\right)\right)(x,y).$$
Now
\begin{eqnarray*} \left (f\star \alpha_{1}\chi_{R_{1}} \right)(x,y)&=&\alpha_{1}\left(f\star \delta_{(M_{1},N_{1})}\star \chi_{[-S_{1},S_{1}]\times [-T_{1},T_{1}]}\right)(x,y)\\
          &=&\sum_{j,k=0}^{\infty}\int_{-S_{1}}^{S_{1}}\int_{-T_{1}}^{T_{1}}\frac{\partial^{2}g_{1}}{\partial x\partial y}(x+(2j+1)S_{1}-s,y+(2k+1)T_{1}-t )dtds\\
          &=&g_{1}(x,y).
\end{eqnarray*}
\noindent
Also differentiating equation \ref{thm2eqn2} partially with respect to $x$ and $y$, we get
\begin{equation}
\left(\frac{\partial^{2}g_{1}}{\partial x\partial y}\star\nu_{1}\right)(x,y)=\frac{\partial^{2}h_{1}}{\partial x\partial y}(x,y)\\
\end{equation}
I.e., \begin{eqnarray*}
\left(\frac{\partial^{2}g_{1}}{\partial x\partial y}\star \left(\frac{1}{\alpha_{1}}\sum_{j=0}^{\infty}\delta_{(-M_{1}-(2j+1)S_{1},-N_{1}-(2k+1)T_{1})}\right)\right)(x,y) &+&\left(\frac{\partial^{2}g_{1}}{\partial x\partial y}\star \left (\frac{1}{\alpha_{2}}\sum_{j=0}^{\infty}\delta_{(-M_{2}-(2j+1)S_{2},-N_{2}-(2k+1)T_{2} )}\right)\right)(x,y)\\
 &=& (\frac{\partial^{2}g}{\partial x\partial y}\star\psi_{1})(x,y).\\
\end{eqnarray*}
\noindent The above equation may be rewritten as
\begin{equation}
f(x,y)=-\left(\frac{\partial^{2}g_{1}}{\partial x\partial y}\star \left(\frac{1}{\alpha_{2}}\sum_{j,k=0}^{\infty}\delta_{(-M_{2}-(2j+1)S_{2},-N_{2}-(2k+1)T_{2})}\right)\right)(x,y) +\left(\frac{\partial^{2}g}{\partial x\partial y}\star\psi_{1}\right)(x,y).
\label{thm2eqn3}
\end{equation}
\noindent Now using the representation of $f$ from equation \ref{thm2eqn3},
\begin{eqnarray*}
(f\star \alpha_{2} \chi_{R_{2}})(x,y)&=&\alpha_{2} (f\star \delta_{(M_{2},N_{2})}\star \chi_{[-S_{2},S_{2}]\times [-T_{2},T_{2}] })(x,y)\\
            &=& -\sum_{j,k=0}^{\infty}\int_{-S_{2}}^{S_{2}} \int_{-T_{2}}^{T_{2}} \frac{\partial^{2}}{\partial x\partial y}g_{1}(x+(2j+1)S_{2}-s, y+(2k+1)T_{2}-t )dtds\\
            &&\;\;\;\;\;\;\;\;\;+\sum_{j,k=0}^{\infty}\int_{-S_{2}}^{S_{2}} \int_{-T_{2}}^{T_{2}}\frac{\partial^{2}}{\partial x\partial y}g(x+(2j+1)S_{2}-s,y+(2k+1)T_{2}-t)dtds\\
            &=&-g_{1}(x,y)+g(x,y)\\
            &=& -(f\star \alpha_{1} \chi_{R_{1}})(x,y) +g(x,y).
\end{eqnarray*}

\noindent Hence $f\star\mu=g.$

\vskip 2em
\noindent Case(ii):  Suppose that $supp(g)\subset H_{r}(v,a).$

\vskip 1em
In this case we use  equation \ref{thm2eqn21} instead of equation \ref{thm2eqn2} as $h_{2}\in C^{2*}({\mathbb{R}}^2)$ with $supp(h_{2})\subset H_{r}(v,\alpha')$ for some $\alpha'\in {\mathbb{R}}.$
 By lemma \ref{lemma3.1}, equation \ref{thm2eqn21} has a solution $g_{2}\in C^{2*}({\mathbb{R}}^2)$ such that $supp(g_{2})\subset H_{r}(v,\alpha'')$ for some $\alpha''\in {\mathbb{R}}.$
 Using  this $g_{2}$ we can construct a funtion $f$ as in case (i) such that $f\star \mu=g.$

 The case    $a_{2}<a_{1},b_{2}<b_{1},c_{1}<c_{2}, d_{1}<d_{2}$ can be analysed as the other case by considering the following difference equations:
\begin{eqnarray*}
\frac{1}{\alpha_{1}}\left[\sum_{j,k=0}^{\infty}g_{1}\left(x+M_{1}-(2j+1)S_{1}, y+N_{1}+(2k+1)T_{1} \right)\right ]
&+& \frac{1}{\alpha_{2}}\left[\sum_{j,k=0}^{\infty}g_{1}\left(x+M_{2}-(2j+1)S_{2},y+N_{2}+(2k+1)T_{2}\right)\right ] \\ &=&\frac{1}{\alpha_{2}}\left[\sum_{j,k=0}^{\infty}g\left(x+M_{2}-(2j+1)S_{2},y+N_{2}+(2k+1)T_{2}\right)\right ].\\
\label{thm2eqn1}
\end{eqnarray*}

\begin{eqnarray*}
\frac{1}{\alpha_{1}}\left[\sum_{j,k=0}^{\infty}g_{1}\left(x+M_{1}+(2j+1)S_{1}, y+N_{1}-(2k+1)T_{1} \right)\right ]
&+& \frac{1}{\alpha_{2}}\left[\sum_{j,k=0}^{\infty}g_{1}\left(x+M_{2}+(2j+1)S_{2},y+N_{2}-(2k+1)T_{2}\right)\right ] \\ &=&\frac{1}{\alpha_{2}}\left[\sum_{j,k=0}^{\infty}g\left(x+M_{2}+(2j+1)S_{2},y+N_{2}-(2k+1)T_{2}\right)\right ].\\
\label{thm2eqn12}
\end{eqnarray*}
As the proof is similar to the other case we omit the proof.

 \end{proof}

\hfil{$\Box$}

\noindent {\bf Proof of Theorem 3.1}
 \begin{proof}

 Let $\sum_{k=1}^{n} \alpha_{k}\chi_{R_{k}}$ be  the density function of $\mu$ such that $n\ge 2.$

\noindent
 As $n=2$  is  considered in theorem \ref{thm2}, we assume that $n>2.$

\noindent
 The density function of $\mu$ can be rewritten as $\sum_{k=1}^n \alpha_{k}\chi_{[-S_{k},S_{k}]\times [-T_{k},T_{k}]}\star \delta_{(M_{k},N_{k})}.$

Suppose that $ a_{i}<a_{i+1},b_{i}<b_{i+1}, c_{i}<c_{i+1}, d_{i}<d_{i+1}.$

\noindent We define the following discrete measures in $M_{d}({\mathbb{R}}^2)$:

 \begin{eqnarray*}
\psi &=&\frac{1}{\alpha_{n}}\sum_{i,j=0}^{\infty}\delta_{(-M_{n}-(2i+1)S_{n},-N_{n}-(2j+1)T_{n})},\\
\tilde{\psi} &=&\frac{1}{\alpha_{n}}\sum_{i,j=0}^{\infty}\delta_{(-M_{n}+(2i+1)S_{n},-N_{n}+(2j+1)T_{n})},\\
\mu_{1}&=&\frac{1}{\alpha_{1}}\sum_{i,j=0}^{\infty}\delta_{(-M_{1}-(2i+1)S_{1},-N_{1}-(2j+1)T_{1})}\mbox{ and }\\
\tilde{\mu}_{1}&=&\frac{1}{\alpha_{1}}\sum_{i,j=0}^{\infty}\delta_{(-M_{1}+(2i+1)S_{1},-N_{1}+(2j+1)T_{1})}.\\
\end{eqnarray*}
Also for $2\leq k\leq n-1,$ define
\begin{eqnarray*}
\zeta_{k}&=&\frac{\alpha_{k}}{\alpha_{1}}\sum_{i,j=0}^{\infty}\delta_{(-M_{1}-(2i+1)S_{1}+M_{k}-S_{k},-N_{1}-(2j+1)T_{1}+N_{k}-T_{k} }\\
\eta_{k}&=&\frac{\alpha_{k}}{\alpha_{1}}\sum_{i,j=0}^{\infty}\delta_{(-M_{1}-(2i+1)S_{1}+M_{k}-S_{k},-N_{1}-(2j+1)T_{1}+N_{k}+T_{k} }\\
\xi_{k}&=&  \frac{\alpha_{k}}{\alpha_{1}}\sum_{i,j=0}^{\infty}\delta_{(-M_{1}-(2i+1)S_{1}+M_{k}+S_{k},-N_{1}-(2j+1)T_{1}+N_{k}-T_{k} }\\
\psi_{k} &=&\frac{\alpha_{k}}{\alpha_{1}}\sum_{i,j=0}^{\infty}\delta_{(-M_{1}-(2i+1)S_{1}+M_{k}+S_{k},-N_{1}-(2j+1)T_{1}+N_{k}+T_{k} }\\
\tilde{\zeta}_{k}&=&\frac{\alpha_{k}}{\alpha_{1}}\sum_{i,j=0}^{\infty}\delta_{(-M_{1}+(2i+1)S_{1}+M_{k}-S_{k},-N_{1}+(2j+1)T_{1}+N_{k}-T_{k} }\\
\tilde{\eta}_{k}&=&\frac{\alpha_{k}}{\alpha_{1}}\sum_{i,j=0}^{\infty}\delta_{(-M_{1}+(2i+1)S_{1}+M_{k}-S_{k},-N_{1}+(2j+1)T_{1}+N_{k}+T_{k} }\\
\tilde{\xi}_{k}&=&  \frac{\alpha_{k}}{\alpha_{1}}\sum_{i,j=0}^{\infty}\delta_{(-M_{1}+(2i+1)S_{1}+M_{k}+S_{k},-N_{1}+(2j+1)T_{1}+N_{k}-T_{k} }\\
\tilde{\psi}_{k}&=& \frac{\alpha_{k}}{\alpha_{1}}\sum_{i,j=0}^{\infty}\delta_{(-M_{1}+(2i+1)S_{1}+M_{k}+S_{k},-N_{1}+(2j+1)T_{1}+N_{k}+T_{k} }\\
\end{eqnarray*}
Clearly $\psi,\tilde{\psi},\mu_{1},\tilde{\mu}_{1},\psi_{k},\tilde{\psi}_{k},\eta_{k},\tilde{\eta}_{k},\xi_{k},\tilde{\xi}_{k},
\zeta_{k},\tilde{\zeta}_{k}\in M_{d}({\mathbb{R}}^2)$ and
\newpage
\begin{eqnarray*}
supp(\psi)&\subset& Q_{3}(-M_{n}-S_{n},-N_{n}-T_{n}),\\
 supp(\mu_{1})&\subset& Q_{3}(-M_{1}-S_{1},-N_{1}-T_{1}),\\
 supp(\zeta_{k}\star \psi)&\subset & Q_{3}(-M_{1}-S_{1}-M_{n}-S_{n}+M_{k}-S_{k},-N_{1}-T_{1}-N_{n}-T_{n}+N_{k}-T_{k}),\\
 supp(\eta_{k}\star\psi)&\subset & Q_{3}(-M_{1}-S_{1}-M_{n}-S_{n}+M_{k}-S_{k},-N_{1}-T_{1}-N_{n}-T_{n}+N_{k}+T_{k}),\\
 supp(\xi_{k}\star\psi)&\subset &Q_{3}(-M_{1}-S_{1}-M_{n}-S_{n}+M_{k}+S_{k},-N_{1}-T_{1}-N_{n}-T_{n}+N_{k}-T_{k}),\\
supp(\psi_{k}\star\psi)&\subset &Q_{3}(-M_{1}-S_{1}-M_{n}-S_{n}+M_{k}+S_{k},-N_{1}-T_{1}-N_{n}-T_{n}+N_{k}+T_{k}),\\
supp(\tilde{\psi})&\subset& Q_{1}(-M_{n}+S_{n},-N_{n}+T_{n}),\\
 supp(\tilde{\mu_{1}})&\subset& Q_{1}(-M_{1}+S_{1},-N_{1}+T_{1}),\\
 supp(\tilde{\zeta_{k}}\star\tilde{\psi})&\subset & Q_{1}(-M_{1}+S_{1}-M_{n}+S_{n}+M_{k}-S_{k},-N_{1}+T_{1}-N_{n}+T_{n}+N_{k}-T_{k}),\\
 supp(\tilde{\eta_{k}}\star\tilde{\psi})&\subset & Q_{1}(-M_{1}+S_{1}-M_{n}+S_{n}+M_{k}-S_{k},-N_{1}+T_{1}-N_{n}+T_{n}+N_{k}+T_{k}),\\
 supp(\tilde{\xi_{k}}\star\tilde{\psi})&\subset &Q_{1}(-M_{1}+S_{1}-M_{n}+S_{n}+M_{k}+S_{k},-N_{1}+T_{1}-N_{n}+T_{n}+N_{k}-T_{k}),\\
supp(\tilde{\psi_{k}}\star\tilde{\psi})&\subset &Q_{1}(-M_{1}+S_{1}-M_{n}+S_{n}+M_{k}+S_{k},-N_{1}+T_{1}-N_{n}+T_{n}+N_{k}+T_{k}).\\
\end{eqnarray*}

\noindent
Consider the following convolution equations:
\begin{equation}
g_{1}\star \nu=h,
\label{thm3eqn1}
\end{equation}
\begin{equation}
\tilde{g}_{1}\star \tilde{\nu}=\tilde{h},
\label{thm3eqn1new}
\end{equation}
where \begin{eqnarray*}
\nu&=&\mu_{1}+\psi +\left(\sum_{k=2}^{n-1}\zeta_{k}\right)\star\psi-\left(\sum_{k=2}^{n-1}\eta_{k}\right)\star\psi -\left(\sum_{k=2}^{n-1}\xi_{k}\right)\star\psi +\left(\sum_{k=2}^{n-1}\psi_{k}\right)\star\psi,\\
  h&=&g\star\psi, \\
 \tilde{\nu}&=&\tilde{\mu}_{1} +\tilde{\psi} +\left(\sum_{k=2}^{n-1}\tilde{\zeta}_{k}\right)\star\tilde{\psi}-\left(\sum_{k=2}^{n-1}\tilde{\eta}_{k}\right)
  \star\tilde{\psi}-\left(\sum_{k=2}^{n-1}\tilde{\xi}_{k}\right)\star\tilde{\psi}
 +\left(\sum_{k=2}^{n-1}\tilde{\psi}_{k}\right)\star\tilde{\psi} \mbox{ and }\\
  \tilde{h}&=&\tilde{g}\star\tilde{\psi}. \\
 \end{eqnarray*}

 \noindent Then $supp(\nu)\subset Q_{3}(\rho,\theta)$ for some $\rho,\theta\in {\mathbb{R}}$ and $supp(\tilde{\nu})\subset Q_{1}(\tilde{\rho},\tilde{\theta})$ for some $\tilde{\rho},\tilde{\theta}\in {\mathbb{R}}.$

In order to solve equation \ref{thm3eqn1}, we need to make sure that $\nu\neq 0.$

\noindent
 Since  $b_{k}-b_{n}<0$  for every $k$,  we get $$-M_{1}-S_{1}-M_{n}-S_{n}+M_{k}\pm S_{k}<-M_{1}-S_{1} $$

\noindent
Choose $r_{1}>0$ such that $$-M_{1}-S_{1}-M_{n}-S_{n}+M_{k}\pm S_{k}<-M_{1}-S_{1}-r_{1}
 \mbox{ for } 2\le k\le n-1,$$
 and
 $$-M_{1}-2S_{1}\not\in (-M_{1}-S_{1}-r_{1},-M_{1}-S_{1}+r_{1}).$$

\noindent
 Similarly, as $d_{k}-d_{n}<0$ for $2\le k\le n-1,$ we get
$$-N_{1}-T_{1}-N_{n}-T_{n}+N_{k}\pm T_{k}<-N_{1}-T_{1} .$$

\noindent
Choose $r_{2}>0$ such that, for $2\le k\le n-1,$
$$-N_{1}-T_{1}-N_{n}-T_{n}+N_{k}\pm T_{k}<-N_{1}-T_{1}-r_{2},$$
and
$$-N_{1}-2T_{1}\not\in (-N_{1}-T_{1}-r_{2},-N_{1}-T_{1}+r_{2}).$$

\noindent
Therefore $$[-M_{1}-S_{1}-r_{1},-M_{1}-S_{1}+r_{1}]\times [-N_{1}-T_{1}-r_{2},-N_{1}-T_{1}+r_{2}]\cap supp(\nu)=\{(-M_{1}-S_{1}, -N_{1}-T_{1})\}.$$

\noindent
Choose an appropriate continuous function $\phi$ such that $\phi(x,y)=0$ for $(x,y) $ not in $[-M_{1}-S_{1}-r_{1},-M_{1}-S_{1}+r_{1}]\times [-N_{1}-T_{1}-r_{2},-N_{1}-T_{1}+r_{2}]$  and $\phi(-M_{1}-S_{1},-N_{1}-T_{1})=1.$ Then
$$\int_{{\mathbb{R}}^{2}}\phi(x,y)d\nu=\frac{1}{\alpha_{1}}\neq 0.$$ Hence $\nu\neq 0.$

Along the similar lines, by utilizing the conditions $a_{k}-a_{n}<0$ and $c_{k}-c_{n}<0$, we  can show that $\tilde{\nu}\neq 0.$

Using lemma \ref{lemma2.5}, there exists $(x^{**},y^{**})\in \mbox{supp}(\nu)$ and $(x^{*},y^{*})\in supp(\tilde{\nu})$ and a pointed convex cone $C$ with bisector $v$ such that $\nu=c^{**}(\delta_{0}+\nu_{1})\star \delta_{(x^{**},y^{**})}$ and
$\tilde{\nu}=c^{*} (\delta_{0}+\tilde{\nu}_{1})\star \delta_{(x^{*},y^{*})}$, $C\subset H_{r}(v,0)$,
$\mbox{supp}(\nu_{1})\subset H_{l}(v,-\rho)\cap(-C)$ and $\mbox{supp}(\tilde{\nu}_{1})\subset H_{r}(v,\theta)\cap C$  for some $\rho,\theta>0$ and
as a consequence of lemma \ref{lemma2.4}, it is sufficient to prove the result for each of the cases $\mbox{supp}(g)\subset H_{r}(v,\alpha)$ and  $\mbox{supp}(g)\subset H_{l}(v,\beta)$ separately.

As $g\in C^{2*}({\mathbb{R}}^2)$, by lemma \ref{lemma3.1},  $h\in C^{2*}({\mathbb{R}}^2)$ and $\mbox{supp}(h)\subset Q_{3}(\alpha'_{1},\beta'_{1})$ for some $\alpha'_{1},\beta'_{1}\in {\mathbb{R}}.$

\noindent {\bf Case(i):} Suppose that $\mbox{supp}(g)\subset H_{l}(v,\rho).$
\noindent
By lemma \ref{lemma3.1}, there exists a solution $g_{1}\in C^{2*}({\mathbb{R}}^{2})$ such that $g_{1}\star \nu=h$ and
$\mbox{supp}(g_{1})\subset Q_{3}(\rho',\theta')$ for some $\rho',\theta'\in {\mathbb{R}}.$ We use this $g_{1}$ to construct a solution $f$.

\vskip 1em
\noindent For $2\leq k\leq n-1,$ set
\begin{eqnarray*}
g_{k}&=&g_{1}\star (\zeta_{k}-\eta_{k}-\xi_{k}+\psi_{k}) \mbox{ and }\\
g_{n}&=&g-g_{1}-g_{2}-\cdots-g_{n-1}.
\end{eqnarray*}

\noindent Then $g_{2},g_{3},\ldots,g_{n}\in C^{2*}({\mathbb{R}}^{2})$ and $g=g_{1}+g_{2}+\cdots +g_{n}.$

\noindent Define
\begin{equation}
f=\frac{\partial^{2}g_{1}}{\partial x\partial y}\star\mu_{1}.
\label{thm3eqn2}
\end{equation}

\noindent Now
\begin{eqnarray*}\left( f\star \alpha_{1}\chi_{[a_{1},b_{1}]\times [c_{1},d_{1}]}\right)(x,y)&=& \left( f\star \alpha_{1}\delta_{(M_{1},N_{1})}\star\chi_{[-S_{1},S_{1}]}\right)(x,y)\\
&=&\left( \frac{\partial^{2}g_{1}}{\partial x\partial y}\star\left(\frac{1}{\alpha_{1}}\sum_{j,k=0}^{\infty}\delta_{(-M_{1}-(2j+1)S_{1},-N_{1}-(2k+1)T_{1})}\right )\star  \alpha_{1}\delta_{(M_{1},N_{1})}\star\chi_{[-S_{1},S_{1}]\times [-T_{1},T_{1}]}\right)(x,y)\\
&=&\left( \frac{\partial^{2}g_{1}}{\partial x\partial y }\star  \left ( \sum_{i,j=0}^{\infty}\delta_{(-(2i+1)S_{1},-(2j+1)T_{1} }\right )\star \chi_{[-S_{1},S_{1}]\times [-T_{1},T_{1}]}\right)(x,y)\\
&=&\sum_{i,j=0}^{\infty}\left(\int_{-S_{1}}^{S_{1}}\int_{-T_{1}}^{T_{1}} \frac{\partial^{2}}{\partial x\partial y} g_{1}(x+(2i+1)S_{1}-s,y+(2j+1)T_{1}-t )dtds\right )\\
&=&g_{1}(x,y).
\end{eqnarray*}

\noindent For $2\leq k \leq n-1,$
\begin{eqnarray*}
\left(f\star \alpha_{k}\chi_{[a_{k},b_{k}]\times [c_{k},d_{k}]}\right)(x,y)&=& \left(\frac{\partial^{2}}{\partial x\partial y }g_{1}\star\mu_{1}\star \alpha_{k}\delta_{(M_{k},N_{k})}\star\chi_{[-S_{k},S_{k}]\times [-T_{k},T_{k}]}\right)(x,y)\\
  &=&\left(\frac{\partial^{2}}{\partial x\partial y }g_{1}\star \left(\frac{\alpha_{k}}{\alpha_{1}}\sum_{i,j=0}^{\infty}\delta_{-M_{1}+M_{k}-(2i+1)S_{1}, -N_{1}+N_{k}-(2j+1)T_{1}}\right) \star\chi_{[-S_{k},S_{k}]\times [-T_{k},T_{k}]}\right)(x,y)\\
  &=&\frac{\alpha_{k}}{\alpha_{1}}\sum_{i,j=0}^{\infty}\int_{-S_{k}}^{S_{k}}\int_{-T_{k}}^{T_{k}}\frac{\partial^{2}}{\partial x\partial y }g_{1}(x+M_{1}-M_{k}+(2i+1)S_{1}-s,\\
 &&\hspace*{2in} y+N_{1}-N_{k}+(2j+1)T_{1}-t )dtds\\
 &=& \frac{\alpha_{k}}{\alpha_{1}}\sum_{i,j=0}^{\infty}g_{1}(x+M_{1}-M_{k}+(2i+1)S_{1}+S_{k},y+N_{1}-N_{k}+(2j+1)T_{1}+T_{k})\\
 &&\;\;\;\;\;\; +\frac{\alpha_{k}}{\alpha_{1}} \sum_{i,j=0}^{\infty}g_{1}(x+M_{1}-M_{k}+(2i+1)S_{1}-S_{k}, y+N_{1}-N_{k}+(2j+1)T_{1}-T_{k})\\
  &&\;\;\;\;\;\; -\frac{\alpha_{k}}{\alpha_{1}} \sum_{i,j=0}^{\infty}g_{1}(x+M_{1}-M_{k}+(2i+1)S_{1}-S_{k}, y+N_{1}-N_{k}+(2j+1)T_{1}+T_{k})\\
   &&\;\;\;\;\;\; -\frac{\alpha_{k}}{\alpha_{1}} \sum_{i,j=0}^{\infty}g_{1}(x+M_{1}-M_{k}+(2i+1)S_{1}+S_{k}, y+N_{1}-N_{k}+(2j+1)T_{1}-T_{k})\\
 &=&\left(g_{1}\star (\zeta_{k}-\eta_{k}-\xi_{k}+\psi_{k})\right)(x,y)\\
 &=&g_{k}(x,y).\\
\end{eqnarray*}
\noindent We observe that
\begin{eqnarray*}
g_{1}\star\nu=h & \Rightarrow
 &
g_{1}\star \mu_{1}+g_{1}\star\psi + g_{1}\star\sum_{k=2}^{n-1}\left(\zeta_{k}-\eta_{k}-\xi_{k}+\psi_{k}\right)\star\psi =g\star\psi\\
&\Rightarrow & g_{1}\star \mu_{1}+g_{1}\star\psi+g_{2}\star\psi+g_{3}\star\psi+\cdots + g_{n-1}\star\psi=g\star\psi\\
&\Rightarrow & g_{1}\star\mu_{1}=(g-g_{1}-g_{2}-\cdots -g_{n-1})\star \psi\\
&\Rightarrow & g_{1}\star\mu_{1}= g_{n}\star\psi.\\
\end{eqnarray*}

\noindent Differentiating partially, we get
\begin{equation}
\frac{\partial^{2}g_{1}}{\partial x\partial y }\star \mu_{1}=\frac{\partial^{2}g_{n}}{\partial x\partial y }\star\psi.
\label{thm3eqn3}
\end{equation}
\noindent Using equations \ref{thm3eqn2} and \ref{thm3eqn3} we obtain,  $$f(x,y)=\frac{1}{\alpha_{n}}\sum_{i,j=0}^{\infty}\frac{\partial^{2}g_{n}}{\partial x\partial y }(x+M_{n}+(2i+1)S_{n}, y+N_{n}+(2j+1)T_{n}).$$
\noindent Convolving both sides with $\alpha_{n}\delta_{(M_{n},N_{n})}\star \chi_{[-S_{n},S_{n}]\times [-T_{n},T_{n}]},$ we get
\begin{eqnarray*}
\left(f\star \alpha_{n}\delta_{(M_{n},N_{n})}\star \chi_{[-S_{n},S_{n}]\times [-T_{n},T_{n}]}\right)(x,y)
&=&\sum_{i,j=0}^{\infty}\int_{-S_{n}}^{S_{n}}\int_{-T_{n}}^{T_{n}}\frac{\partial^{2}g_{n}}{\partial x\partial y }(x+(2i+1)S_{n}-s,y+(2j+1)T_{n}-t )dtds \\
&=&g_{n}(x,y).\\
\end{eqnarray*}
Thus we have $g_{1}+ g_{2}+\cdots + g_{n}=g$ and $f\star \alpha_{i}\chi_{[a_{i},b_{i}]\times [c_{i},d_{i}]}=g_{i}$ for $1\le i\le n.$
Hence $f\star\mu=g.$

\vskip 1em
\noindent {\bf Case(ii):} The case  $supp(g)\subset H_{r}(v,\theta)$ can be proved by considering equation \ref{thm3eqn1new}.

The case  $a_{i+1}<a_{i}, b_{i+1}<b_{i},c_{i}<c_{i+1}, d_{i}<d_{i+1}$  can also be proved in a similar fashion.
\hfill{$\Box$}

\end{proof}

\noindent {\bf Open Problems:}

\begin{enumerate}
\item  Let the density function of $\mu$ be the indicator function of unit disc $B(0,1)={(x,y)\in \{\mathbb{R}}^2: x^2 +y^2 \le 1\}$  in ${\mathbb{R}}^2.$ What is the range of $C_{\mu}$?
\item Let  $R_{i}=\left[a_{i}, b_{i}\right]\times \left[c_{i}, d_{i}\right]$ be such that  none of the conditions (\ref{cond1}) and (\ref{cond2}) are  satisfied and   the density function of $\mu$ is  $\alpha_{1}\chi_{R_{1}}+\alpha_{2}\chi_{R_{2}},$ where $\alpha_{1},\alpha_{2}\neq 0.$ Is the range of $C_{\mu}$ equal to  $C^{2*}({\mathbb{R}}^2)?$
\end{enumerate}


\begin{thebibliography}{plain}










\bibitem{bag}
Bagchi S. C.  and  Sitaram A., Spherical mean-periodic functions on semi simple
Lie groups,  Pacific J. Math.  84(2)(1979)241-250.
\bibitem{ber1}
Berenstein C.A.  and Taylor B. A.,  Mean-periodic functions,
Internat. J. Math. $\&$ Math. Sci.  3(2)(1980) 199-235.
    \bibitem{bro}Brown L., Schreiber B. M. and Taylor B. A, Spectral synthesis and the Pompeiu problem, Anna. Inst. Fourier(Grenoble)23(1973)125-154.
\bibitem{cha} Chakalav L., Sur un  probleme de Pompeiu, Annuaire Univ. Sofia Fac. Phys. Math. 40(1944)1-44.
\bibitem{Del}
Delsarte J.,  Les fonctions moyenne-periodiques J. Math.
Pures Appl. 14(1935) 403-453.
\bibitem{deva1}Devaraj P., Reconstruction from local averages involving discrete measures,
Rendiconti del Circolo Matematico di Palermo 59(2010)261-166.
\bibitem{deva2} Devaraj P., Reconstruction from local discrete averages on the plane,  Journal of Mathematical Analysis and Applications,373 (2011) 13-19.
    \bibitem{deva3}Devaraj P., Reconstruction of continuous functions from locally uniform weighted averages, Journal of  Mathematical Analysis and  Applications 390 (2012) 407-417.
\bibitem{deva}Devaraj  P. and Inder K. Rana, Vector valued mean-periodic functions, Journal of Australian Mathematical Society 72(2002) 363-388.
\bibitem{Ehr1}
Ehrenpreis L., Appendix to the paper ``Mean periodic functions
I",
   Amer. J. Math.  77(1955)731-733.
\bibitem{Ehr2}
Ehrenpreis L.,  Mean periodic functions, Part I. Varieties whose
annihilator ideals are principal,
Amer. J. Math. 77(1955)293-328.
\bibitem{hel}Helgason S., The Radon Transform, Birkh$\ddot{\mbox{a}}$user, 1980.
\bibitem{kah}
Kahane J. P., Lectures on mean-periodic functions,  Tata
Institute, 1957.
\bibitem{nov} Novak E.,An elementary approach to unsmoothing over cubes, Indag. Math. 48 (1986) 209-212
    \bibitem{rana1} Novak E. and Inder K. Rana,  On the unsmoothing of functions on the real line,
Proc. Nede. Acad.Sci. Ser. A 89(1986) 201-207.
\bibitem{Pat}Patricia K. Lamm, Some Recent Developments and Open Problems in Solution Methods for Mathematical Inverse Problems, Proceedings, XXIV National Congress of Applied and Computational Mathematics, Belo Horizonte, Brazil, September 10-13, (2001).
     \bibitem{pom} Pompeiu D, Sur une propri$\acute{e}$t$\acute{e}$ des fonctions continues dependant de plusieurs  variables, Bull. Sci. Math. 53 (1929) 328-332.
\bibitem{rad} Radon J., $\ddot{{\mbox U}}$ber die Bestimmung von Funktionen durch ihre Interalwerte l$\ddot{\mbox{ a}}$ngs Bestmmter Mannigfaltigkeiten, Ber. Verh. S$\ddot{\mbox{a}}$chs. Akad. Wiss. Leipzig, Math. Nat. Kl. 69(1917) 262-277.
\bibitem{sch} Schwartz  L., Theorie generale des fonctions
moyenne-periodiquies,
Ann. of Math. 48(1947)857-929.
\bibitem{sze}
Szekelyhidi L., Spectral synthesis problem on locally compact groups, Monatsh Math, Springer-Verlag (2009).
%\bibitem{thangavelu} Thangavelu S., Mean periodic functions on phase space and the Pompeiu problem with a twist, Ann. Inst. Fourier (Grenoble) 45(4)(1995)1007-1035.
\bibitem{wei}
Weit Y., On Schwartz theorem for the motion group,  Ann. Inst.
Fourier (Grenoble) 30(1980)91-107.

\end{thebibliography}
\end{document}